\newtheorem{theorem}{Theorem}[section]
\newtheorem{corollary}[theorem]{Corollary}
\newtheorem{lemma}[theorem]{Lemma}
\newtheorem{proposition}[theorem]{Proposition}
\theoremstyle{definition}
\newtheorem{definition}[theorem]{Definition}
\newtheorem{example}[theorem]{Example}
\theoremstyle{remark}
\newtheorem{remark}[theorem]{Remark}
\numberwithin{equation}{section}
\DeclareMathOperator{\diam}{diam}
\newcommand{\BB}{\mathbf{B}}
\newcommand{\RR}{\mathbb{R}}
\title{Finite Ultrametric Balls}
\author{O. Dovgoshey}
\address{Function theory department, Institute of Applied Mathematics and Mechanics of NASU, Dobrovolskogo str. 1, Slovyansk 84100, Ukraine}
\email{oleksiy.dovgoshey@gmail.com}
\subjclass[2010]{Primary 54E35}
\keywords{finite ultrametric space, Hausdorff distance, representing tree, diameter of set, the smallest ball containing a given bounded set}
\begin{document}
\begin{abstract}
The necessary and sufficient conditions under which a given family \(\mathcal{F}\) of subsets of finite set \(X\) coincides with the family \(\BB_X\) of all balls generated by some ultrametric \(d\) on \(X\) are found. It is shown that the representing tree of the ultrametric space \((\BB_{X}, d_H)\) with the Hausdorff distance \(d_H\) can be obtained from the representing tree \(T_X\) of ultrametric space \((X, d)\) by adding a leaf to every internal vertex of \(T_X\).
\end{abstract}

\maketitle

\section{Introduction. Balls in ultrametric space}

The main object of research in this paper is the set of all balls in a given finite ultrametric space. The theory of ultrametric spaces is closely connected with various directions of studies in mathematics, physics, linguistics, psychology and computer science. Different properties of ultrametric spaces are described at~\cite{DM2009, DD2010, DP2013SM, Groot1956, Lemin1984FAA, Lemin1984RMS39:5, Lemin1984RMS39:1, Lemin1985SMD32:3, Lemin1988, Lemin1999, Lemin2003, QD2009, QD2014, BS2017, DM2008, DLPS2008, KS2012, Vaughan1999, Vestfrid1994, Ibragimov2012, GomoryHu(1961), PTAbAppAn2014}.

The use of trees and tree-like structures gives a natural language for description of ultrametric spaces. For the relationships between these spaces and the leaves or the ends of certain trees see~\cite{Carlsson2010, DLW, Fie, GurVyal(2012), GV, Hol, H04, BH2, Lemin2003, Ber2001, Bestvina2002}. In particular, a convenient representation of finite ultrametric spaces \((X, d)\) by monotone canonical trees was found by~V.~Gurvich and M.~Vyalyi~\cite{GurVyal(2012)}. A simple algorithm having a clear geometric interpretation was proposed in~\cite{PD(UMB)} for constructing monotone canonical trees. Following~\cite{PD(UMB)} we will say that these trees are representing trees of spaces \((X, d)\). The present paper can be considered as a development of studies initiated at~\cite{GurVyal(2012)} and continued at \cite{DDP(P-adic),DP2019, DPT(Howrigid),PD(UMB), P(TIAMM), P2018(p-Adic)}.

The paper is organized as follows.

Section~1 contains some standard definitions from the theory metric spaces and a short list of known properties of balls in ultrametric spaces.

The representing trees of finite ultrametric spaces \((X, d)\) are the main technical tool in the present paper. These trees are introduced and discussed in Section~2. In particular, in Theorem~\ref{th2.6} it is proved that the vertices of such trees are the closed balls of spaces \((X, d)\).

The main new results of the paper are concentrated in Section~3. The necessary and sufficient conditions under which a given family of subsets of finite set \(X\) coincides with the family of all balls generated by some ultrametric on \(X\) are obtained in Theorem~\ref{th1.10}. Theorem~\ref{th3.6}, Theorem~\ref{th3.11} and Proposition~\ref{p3.7} describe up to isomorphism the structure of representing trees of the metric space of all balls of finite ultrametric spaces.

Recall some definitions from the theory of metric spaces.

\begin{definition}\label{d1.1}
A \textit{metric} on a set $X$ is a function $d\colon X\times X\rightarrow \RR^{+}$, \(\RR^{+} = [0, \infty)\), such that for all $x$, $y$, $z \in X$:
\begin{enumerate}
\item $d(x,y)=d(y,x)$,
\item $(d(x,y)=0)\Leftrightarrow (x=y)$,
\item $d(x, y)\leq d(x, z) + d(z, y)$.
\end{enumerate}
\end{definition}

For every nonempty set \(A \subseteq X\), the quantity
\begin{equation}\label{eq1.3}
\diam A := \sup\{d(x, y)\colon x, y \in A\}
\end{equation}
is the \emph{diameter} of \(A\).

A metric \(d \colon X \times X \to \RR^{+}\) is an \emph{ultrametric} on $X$ if
\begin{equation}\label{eq1.2}
d(x,y) \leq \max \{d(x,z),d(z,y)\}
\end{equation}
holds for all $x$, $y$, $z \in X$. Inequality~\eqref{eq1.2} is often called the {\it strong triangle inequality}. A metric space $(X, d)$ is ultrametric if every triangle in $(X, d)$ is isosceles with base not greater than legs.

Let $(X,d)$ be a metric space. A \emph{closed ball} with a radius $r \geqslant 0$ and a center $c\in X$ is the set $B_r(c)=\{x\in X\colon d(x,c)\leqslant r\}$. The \emph{ballean} $\BB_X$ of the metric space $(X,d)$ is the set of all closed balls in $(X,d)$. Call the elements of $\BB_X$ the balls for short. Note that every one-point subset of $X$ belongs to $\BB_X$, this is a \emph{singular} ball in~$(X, d)$. For a ball $B \in \BB_X$ with a center $c$, the \emph{actual} radius of $B$ is the number
\[
\sup\{d(x, c) \colon x \in B\}.
\]

In the case of an arbitrary metric space \((X, d)\), the actual radius of a given ball \(B \in \BB_X\) can depend on which point \(c \in B\) we chose as the center of \(B\). For example, if \(X\) is the interval \([0, 2]\) with the standard metric \(d(x, y) = |x - y|\) and \(B = X\), then every \(c \in X\) can be considered as a center of \(B\) so that the corresponding actual radius equals \(\min\{c, 2-c\}\).

If $(X, d)$ is an ultrametric space and $A$ is a nonempty subset of $X$, then, using the strong triangle inequality, we can easily prove the equalities
\begin{equation}\label{eq1.1}
\diam A = \sup\{d(x,a) \colon x \in A\} = \sup\{d(x, b) \colon x \in A\}.
\end{equation}
for all \(a\), \(b\in A\). Now using~\eqref{eq1.1}, we obtain the following.

\begin{proposition}\label{pr1.2}
Let $(X, d)$ be an ultrametric space and let $B \in \BB_X$ be a ball with a radius $r$. Then the following statements hold.
\begin{enumerate}
\item\label{pr1.2:s1} Every point of $B$ is a center of $B$, i.e., the equalities
\[
B = B_r(c_1) = B_r(c_2)
\]
hold for all $c_1$, $c_2 \in B$.
\item\label{pr1.2:s2} The actual radius of $B$ is equal to the diameter of $B$ for every center of $B$.
\end{enumerate}
\end{proposition}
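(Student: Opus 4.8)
The plan is to treat the two statements in order, deriving (i) directly from the strong triangle inequality and then obtaining (ii) as an immediate consequence of the already-established identity~\eqref{eq1.1}. Fix a center \(c\) of \(B\), so that \(B = B_r(c)\) by definition, and let \(c_1 \in B\) be arbitrary. The heart of (i) is the claim that \(B_r(c_1) = B_r(c)\), which I would prove by showing both inclusions.

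For the inclusion \(B_r(c) \subseteq B_r(c_1)\), take any \(x \in B_r(c)\); then \(d(x, c) \leqslant r\), and since \(c_1 \in B = B_r(c)\) we also have \(d(c_1, c) \leqslant r\). The strong triangle inequality~\eqref{eq1.2} gives \(d(x, c_1) \leqslant \max\{d(x, c), d(c, c_1)\} \leqslant r\), so \(x \in B_r(c_1)\). The reverse inclusion is symmetric: for \(x \in B_r(c_1)\) we combine \(d(x, c_1) \leqslant r\) with \(d(c_1, c) \leqslant r\) to get \(d(x, c) \leqslant \max\{d(x, c_1), d(c_1, c)\} \leqslant r\). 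Hence \(B_r(c_1) = B_r(c) = B\), and since \(c_1 \in B\) was arbitrary, the same holds for any \(c_2 \in B\), which yields \(B = B_r(c_1) = B_r(c_2)\) and proves (i).

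Statement (ii) should then follow with essentially no extra work. For a center \(c\) of \(B\), the actual radius is \(\sup\{d(x, c) \colon x \in B\}\) by definition, and applying~\eqref{eq1.1} with \(A = B\) and \(a = c\) identifies this supremum with \(\diam B\). By (i), every point of \(B\) serves as a center, so the actual radius equals \(\diam B\) regardless of the chosen center, giving (ii).

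I do not expect a genuine obstacle here; the only point deserving care is conceptual rather than technical, namely that interchangeability of centers is a strictly ultrametric phenomenon. In a general metric space a ball's actual radius depends on the chosen center (as the interval \([0,2]\) example already shows), and it is precisely the strong triangle inequality~\eqref{eq1.2} that collapses this dependence by making every member of \(B\) reachable from every other within radius \(r\). I would therefore make sure the exposition emphasizes that both inclusions in (i) use~\eqref{eq1.2}, since that is the sole ingredient distinguishing the ultrametric case from the general one.
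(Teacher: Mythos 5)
Your proof is correct and follows exactly the route the paper intends: the paper states Proposition~\ref{pr1.2} as an immediate consequence of the strong triangle inequality~\eqref{eq1.2} via the identity~\eqref{eq1.1}, which is precisely your two-inclusion argument for (i) and your application of~\eqref{eq1.1} with \(A = B\), \(a = c\) for (ii). You have merely written out the details the paper leaves implicit, including the apt remark that center-independence fails in general metric spaces (the paper's own \([0,2]\) example).
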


\begin{corollary}\label{c1.2}
Let \((X, d)\) be an ultrametric space. Then, for every \(x \in X\) and ever \(r \geqslant 0\), there is a unique ball \(B \in \BB_X\) with the radius \(r\) and such that \(x\) belongs to \(B\).
\end{corollary}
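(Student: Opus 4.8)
The plan is to obtain both parts of the statement as immediate consequences of Proposition~\ref{pr1.2}, so that essentially no new work is required beyond a correct bookkeeping of centers and radii.

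For existence, I would exhibit the closed ball $B_r(x)$ directly. Since $d(x, x) = 0 \leqslant r$, the center $x$ lies in $B_r(x)$, and by definition $B_r(x) \in \BB_X$ is a ball of radius $r$. Thus at least one ball with the required radius containing $x$ always exists, and this step uses nothing more than the definition of a closed ball.

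For uniqueness, suppose $B \in \BB_X$ is an arbitrary ball of radius $r$ with $x \in B$. The decisive tool is statement~\ref{pr1.2:s1} of Proposition~\ref{pr1.2}, which guarantees that every point of $B$ may be taken as a center of $B$ with the same radius $r$. Choosing $x$ itself as this center yields $B = B_r(x)$. Hence any ball of radius $r$ containing $x$ must coincide with the specific ball $B_r(x)$ constructed above, which is precisely the asserted uniqueness.

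I do not expect a genuine obstacle here: the only substantive content --- that a point together with a prescribed radius pins down the ball uniquely --- has already been absorbed into Proposition~\ref{pr1.2}, whose proof in turn rests on the strong triangle inequality. It is worth emphasising that the ultrametric hypothesis is essential at exactly this point: in a general metric space the actual radius of a ball depends on the chosen center, as the interval example preceding Proposition~\ref{pr1.2} shows, and uniqueness of the ball determined by a point and a radius can fail.
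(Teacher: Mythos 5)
Your proof is correct and takes exactly the route the paper intends: Corollary~\ref{c1.2} is stated there without a separate proof precisely because it is an immediate consequence of Proposition~\ref{pr1.2}, with existence given by the ball \(B_r(x)\) (which contains \(x\) since \(d(x,x)=0\leqslant r\)) and uniqueness given by statement~\ref{pr1.2:s1}, which lets you re-center any ball of radius \(r\) containing \(x\) at the point \(x\) itself, forcing \(B = B_r(x)\). Your closing remark correctly locates where the ultrametric hypothesis enters, so there is nothing to fix.
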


Proposition~\ref{pr1.2} and the definition of balls give us the following description of the balls in ultrametric spaces.

\begin{corollary}\label{c1.3}
Let $(X, d)$ be an ultrametric space, let $A$ be a bounded subset of $X$ and let $a \in A$. Then the following statements are equivalent:
\begin{enumerate}
\item\label{c1.3:s1} $A \in \BB_X$;
\item\label{c1.3:s2} The equivalence
\[
(d(x, a) \leqslant \diam A) \Leftrightarrow (x \in A)
\]
holds for every $x \in X$;
\item\label{c1.3:s3} The equality
\[
\BB_A = \{B \in \BB_X \colon B \subseteq A\}
\]
holds, where \(\BB_A\) is the ballean of the ultrametric space \((A, d|_{A \times A})\).
\end{enumerate}
\end{corollary}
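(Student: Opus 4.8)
The plan is to use statement~(i) as a hub and to prove the two equivalences (i)$\Leftrightarrow$(ii) and (i)$\Leftrightarrow$(iii) separately, since both Proposition~\ref{pr1.2} and formula~\eqref{eq1.1} are phrased directly in terms of whether $A$ is a ball. Throughout I would use that $A$ is bounded and nonempty, so that $r := \diam A$ is a finite nonnegative number and the chosen point $a \in A$ exists.

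For (i)$\Rightarrow$(ii), assume $A \in \BB_X$. Since $a \in A$, statement~\ref{pr1.2:s1} of Proposition~\ref{pr1.2} lets me take $a$ as a center, and part~\ref{pr1.2:s2} gives $A = B_r(a)$ with $r = \diam A$; hence $x \in A$ holds exactly when $d(x,a) \leqslant \diam A$, which is precisely the required equivalence. For the converse (ii)$\Rightarrow$(i), statement~(ii) says literally that $A = \{x \in X \colon d(x,a) \leqslant \diam A\} = B_{\diam A}(a)$, so $A$ is a ball and $A \in \BB_X$.

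The reverse implication (iii)$\Rightarrow$(i) is immediate: $A$ is a ball of its own subspace $(A, d|_{A\times A})$ of radius $\diam A$, so $A \in \BB_A = \{B \in \BB_X \colon B \subseteq A\}$ forces $A \in \BB_X$. For (i)$\Rightarrow$(iii) I must establish two inclusions. The inclusion $\{B \in \BB_X \colon B \subseteq A\} \subseteq \BB_A$ is routine: an $X$-ball $B \subseteq A$ has a center $c \in B \subseteq A$ and a radius $\rho$, and intersecting the defining inequality $d(\cdot,c) \leqslant \rho$ with $A$ shows that $B$ coincides with the subspace ball of center $c$ and radius $\rho$. The main obstacle is the opposite inclusion $\BB_A \subseteq \{B \in \BB_X \colon B \subseteq A\}$, that is, showing that a ball $B' = \{y \in A \colon d(y,c) \leqslant \rho\}$ of the subspace $(A, d|_{A\times A})$ is genuinely a ball of $(X,d)$. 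Here I would apply Proposition~\ref{pr1.2} to the ball $A \in \BB_X$ with the center $c \in A$ to get $A = B_{\diam A}(c)$, and then split into cases: if $\rho \geqslant \diam A$, then~\eqref{eq1.1} yields $d(y,c) \leqslant \diam A \leqslant \rho$ for every $y \in A$, so $B' = A \in \BB_X$; if $\rho < \diam A$, then $B_\rho(c) \subseteq B_{\diam A}(c) = A$, whence $B' = B_\rho(c)$ is already an $X$-ball. This case analysis, resting on the ultrametric fact that $A$ is centered at each of its own points with radius $\diam A$, is the one place where the ultrametric structure is essential and is the crux of the argument.
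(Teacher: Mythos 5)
Your proof is correct and takes exactly the route the paper intends: the paper offers no separate argument for Corollary~\ref{c1.3}, stating only that it follows from Proposition~\ref{pr1.2} and the definition of balls, and your write-up is precisely the fleshed-out version of that, with the genuinely ultrametric step correctly isolated in the inclusion \(\BB_A \subseteq \{B \in \BB_X \colon B \subseteq A\}\), where \(A = B_{\diam A}(c)\) for every \(c \in A\) makes the case analysis on \(\rho\) work.
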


The following proposition is well-known.

\begin{proposition}\label{p1.8}
Let $(X, d)$ be an ultrametric space and let $B_1$, $B_2 \in \BB_X$. Then the following conditions hold.
\begin{enumerate}
\item\label{p1.8:s1} $B_1 \cap B_2 \in \BB_X$ if and only if $B_1 \cap B_2 \neq \varnothing$.
\item\label{p1.8:s2} If $B_1 \cap B_2 \neq \varnothing$, then we have $B_1 \subseteq B_2$ or $B_2 \subseteq B_1$.
\item\label{p1.8:s3} $B_1 = B_2$ holds if and only if $B_1 \cap B_2 \neq \varnothing$ and
\[
\diam B_1 = \diam B_2.
\]
\end{enumerate}
\end{proposition}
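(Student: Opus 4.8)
The plan is to establish (ii) first, since both (i) and (iii) will follow from it with little extra work. The whole argument rests on the fact, supplied by Proposition~\ref{pr1.2}, that any point common to two balls may be taken as a simultaneous center of both; this removes the apparent dependence on the radii with which the balls were defined.

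For (ii), I would take a point $a \in B_1 \cap B_2$. By Proposition~\ref{pr1.2}(i), $a$ is a center of both balls, and by Proposition~\ref{pr1.2}(ii) the actual radius measured from $a$ equals $\diam B_1$ for $B_1$ and $\diam B_2$ for $B_2$. Assuming without loss of generality that $\diam B_1 \leqslant \diam B_2$, every $x \in B_1$ satisfies $d(x, a) \leqslant \diam B_1 \leqslant \diam B_2$, so the equivalence in Corollary~\ref{c1.3}(ii), applied to $B_2$ with the center $a$, gives $x \in B_2$. This would yield $B_1 \subseteq B_2$.

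Part (i) then follows quickly. If $B_1 \cap B_2 \in \BB_X$, then $B_1 \cap B_2$ is a ball, hence contains its center and is nonempty. Conversely, if $B_1 \cap B_2 \neq \varnothing$, then by (ii) one of the balls is contained in the other, so the intersection coincides with the smaller of $B_1$, $B_2$ and is therefore an element of $\BB_X$.

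For (iii) the forward implication is immediate. For the converse I would assume $B_1 \cap B_2 \neq \varnothing$ and $\diam B_1 = \diam B_2$; by (ii) I may suppose $B_1 \subseteq B_2$, and then choosing $a \in B_1$ and applying Corollary~\ref{c1.3}(ii) to $B_1$ shows that every $x \in B_2$ satisfies $d(x, a) \leqslant \diam B_2 = \diam B_1$, whence $x \in B_1$. Thus $B_2 \subseteq B_1$ and the balls coincide. I expect no serious obstacle here; the only point needing care is the legitimate reuse of a shared point as a center, so that the diameters rather than the declared radii control all the comparisons.
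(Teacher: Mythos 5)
Your proof is correct, and in fact the paper offers nothing to compare it against: Proposition~\ref{p1.8} is stated there as ``well-known'' with no proof given. Your derivation is the natural one that the paper's preceding material sets up --- using Proposition~\ref{pr1.2} to recenter both balls at a common point $a \in B_1 \cap B_2$ and then letting the diameters (via Corollary~\ref{c1.3}\ref{c1.3:s2}) control the inclusions --- and all three parts, including the reduction of \ref{p1.8:s1} and \ref{p1.8:s3} to \ref{p1.8:s2}, go through exactly as you describe.
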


Condition~\ref{p1.8:s1} of Proposition~\ref{p1.8} implies that for every bounded nonempty subset \(A\) of an ultrametric space \((X, d)\) there is the smallest ball \(B \in \BB_{X}\) containing \(A\).

The following proposition gives us a partial inversion of Proposition~\ref{pr1.2} and Proposition~\ref{p1.8}.

\begin{proposition}\label{p3.18}
Let \((X, d)\) be a finite nonempty metric space and let \(\BB_{X}\) be the set of all balls of \((X, d)\). Then the following conditions are equivalent.
\begin{enumerate}
\item \label{p3.18:s1} \(d\) is an ultrametric.
\item \label{p3.18:s2} If \(B_1\) and \(B_2\) belong to \(\BB_{X}\) and \(B_1 \cap B_2 \neq \varnothing\), then we have \(B_1 \subseteq B_2\) or \(B_2 \subseteq B_1\).
\item \label{p3.18:s3} For every ball \(B \in \BB_{X}\), each point \(b \in B\) is a center of \(B\).
\end{enumerate}
\end{proposition}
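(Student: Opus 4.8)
The plan is to prove the two equivalences \ref{p3.18:s1}$\,\Leftrightarrow\,$\ref{p3.18:s2} and \ref{p3.18:s1}$\,\Leftrightarrow\,$\ref{p3.18:s3} separately, and for this it is enough to observe that both forward implications are already available. Indeed, \ref{p3.18:s1}$\,\Rightarrow\,$\ref{p3.18:s2} is precisely statement~\ref{p1.8:s2} of Proposition~\ref{p1.8}, and \ref{p3.18:s1}$\,\Rightarrow\,$\ref{p3.18:s3} is precisely statement~\ref{pr1.2:s1} of Proposition~\ref{pr1.2}. Hence all the content lies in the two reverse implications \ref{p3.18:s2}$\,\Rightarrow\,$\ref{p3.18:s1} and \ref{p3.18:s3}$\,\Rightarrow\,$\ref{p3.18:s1}, which I would establish by contraposition.

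So suppose the strong triangle inequality~\eqref{eq1.2} fails. Then there are points $x$, $y$, $z \in X$ with $d(x,y) > \max\{d(x,z), d(z,y)\}$, so that $d(x,y) > d(x,z)$ and $d(x,y) > d(z,y)$ simultaneously. From this single configuration I would build a witness refuting \ref{p3.18:s2} and a witness refuting \ref{p3.18:s3}.

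To refute \ref{p3.18:s2}, consider the balls $B_1 = B_{d(x,z)}(x)$ and $B_2 = B_{d(z,y)}(y)$, both of which lie in $\BB_X$ by definition. A short verification gives $z \in B_1 \cap B_2$, whereas $x \in B_1 \setminus B_2$ (because $d(x,y) > d(z,y)$) and $y \in B_2 \setminus B_1$ (because $d(x,y) > d(x,z)$). Thus $B_1 \cap B_2 \neq \varnothing$ while neither ball contains the other, contradicting \ref{p3.18:s2}. To refute \ref{p3.18:s3}, I may assume after interchanging $x$ and $y$ that $d(z,y) \leq d(x,z)$, and I take $B = B_{d(x,z)}(x)$. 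Then $z \in B$ but $y \notin B$, and $z$ cannot be a center of $B$: from $B = B_s(z)$ one would get $d(x,z) \leq s$, hence $d(z,y) \leq d(x,z) \leq s$ and therefore $y \in B_s(z) = B$, which is false. So $z$ is a point of $B$ that is not a center, contradicting \ref{p3.18:s3}.

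Together the four implications give \ref{p3.18:s1}$\,\Leftrightarrow\,$\ref{p3.18:s2} and \ref{p3.18:s1}$\,\Leftrightarrow\,$\ref{p3.18:s3}, hence the full equivalence. The \textbf{main obstacle}, modest as it is, lies in choosing the test balls correctly: for \ref{p3.18:s3} the argument is asymmetric and works only when the ball is built on the \emph{larger} of the two legs $d(x,z)$, $d(z,y)$, so that the third point is forced inside---this is the reason for the harmless relabeling. It is worth noting that finiteness of $X$ plays no role in this particular argument; it is inherited from the standing setting of the paper.
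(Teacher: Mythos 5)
Your proposal is correct, but it is organized differently from the paper's proof. The paper establishes the single cycle \ref{p3.18:s1}\,\(\Rightarrow\)\,\ref{p3.18:s3}\,\(\Rightarrow\)\,\ref{p3.18:s2}\,\(\Rightarrow\)\,\ref{p3.18:s1}: the first arrow is quoted from Proposition~\ref{pr1.2}, the second is dismissed as evident (if \(b \in B_1 \cap B_2\), then by \ref{p3.18:s3} both balls may be recentered at \(b\), and two balls with a common center are nested), and the third is proved \emph{directly}, not by contraposition: for arbitrary \(x\), \(y\), \(z \in X\) one sets \(r = d(x,y)\), \(s = d(y,z)\) and considers \(B_1 = B_r(x)\), \(B_2 = B_s(z)\), which both contain \(y\); condition~\ref{p3.18:s2} yields \(B_1 \subseteq B_2\) or \(B_2 \subseteq B_1\), hence \(x \in B_s(z)\) or \(z \in B_r(x)\), and in either case \(d(x,z) \leqslant \max\{d(x,y), d(y,z)\}\) --- no failure configuration, no case split, no relabeling. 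Your test-ball construction is the same up to renaming (your common point \(z\) plays the role of the paper's apex \(y\)), but you run it in contrapositive form and prove \ref{p3.18:s2}\,\(\Rightarrow\)\,\ref{p3.18:s1} and \ref{p3.18:s3}\,\(\Rightarrow\)\,\ref{p3.18:s1} independently, giving four implications instead of the paper's three. This costs the extra implication and the WLOG step for \ref{p3.18:s3}, and it buys explicit witnesses: whenever the strong triangle inequality fails you exhibit a concrete overlapping incomparable pair of balls and a concrete ball with a non-central point, and the implication from \ref{p3.18:s3} does not have to route through \ref{p3.18:s2}. All your verifications check out; in particular the asymmetric choice of the larger leg \(d(x,z)\) for the \ref{p3.18:s3} witness is exactly what forces \(y\) back into \(B_s(z)\). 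Your closing remark that finiteness is never used is also accurate, and it applies equally to the paper's argument, which is likewise valid in an arbitrary metric space.
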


\begin{proof}
\(\ref{p3.18:s1} \Rightarrow \ref{p3.18:s3}\). This implication follows from Proposition~\ref{pr1.2}.

\(\ref{p3.18:s3} \Rightarrow \ref{p3.18:s2}\). It is evident.

\(\ref{p3.18:s2} \Rightarrow \ref{p3.18:s1}\). Suppose \ref{p3.18:s2} holds. Let \(x\), \(y\), \(z\) be points of \(X\) and \(r = d(x, y)\) and \(s = d(y, z)\). Let us consider the balls
\[
B_1 := B_r(x) \quad \text{and} \quad B_2 := B_s(z).
\]
It is clear that \(y \in B_1 \cap B_2\). According to condition~\ref{p3.18:s2} we obtain
\[
B_1 \subseteq B_2 \quad \text{or} \quad B_2 \subseteq B_1.
\]
It implies
\[
x \in B_s(z) \quad \text{or} \quad z \in B_r(x).
\]
The strong triangle inequality
\[
d(x, z) \leqslant \max\{d(x, y), d(y, z)\}
\]
follows.
\end{proof}

%\begin{remark}\label{r1.7}
%The initial variant of the paper contains Proposition~\ref{p3.18} formulated for finite metric spaces only. The above formulation as well as the proof of implication \(\ref{p3.18:s2} \Rightarrow \ref{p3.18:s1}\) was proposed by anonymous referee of the paper.
%\end{remark}

\begin{definition}\label{d1.4}
Let \((X, d)\) be a metric space and let \(A\) be a bounded nonempty subset of \(X\). A ball \(B^* \in \BB_X\) is the smallest ball containing \(A\) if \(B^* \supseteq A\) and the implication
\begin{equation}\label{d1.4:e1}
(B \supseteq A) \Rightarrow (B \supseteq B^*)
\end{equation}
holds for every \(B \in \BB_X\).
\end{definition}

It follows directly from~\eqref{d1.4:e1} that the smallest ball containing a given \(A\) is unique if it exists.

We can simply characterize the smallest ball containing a given bounded subset of an ultrametric space.

\begin{proposition}\label{p1.5}
Let $(X, d)$ be an ultrametric space, let $A$ be a bounded subset of $X$ and let $a \in A$. Then the ball $B_r(a)$ with $r = \diam A$ is the smallest ball containing \(A\).
\end{proposition}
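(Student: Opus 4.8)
The plan is to verify the two defining properties of Definition~\ref{d1.4} separately: first that the ball \(B_r(a)\) with \(r = \diam A\) actually contains \(A\), and then that it is contained in every ball that contains \(A\).

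For the containment \(A \subseteq B_r(a)\), I would invoke the ultrametric identity~\eqref{eq1.1} with \(a\) as the chosen base point, which gives \(\diam A = \sup\{d(x,a) \colon x \in A\}\). Hence \(d(x,a) \leqslant \diam A = r\) for every \(x \in A\), and this is exactly the statement that \(x \in B_r(a)\). So \(A \subseteq B_r(a)\), and \(B_r(a)\) is a legitimate candidate for the smallest ball.

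For minimality, let \(B \in \BB_X\) be any ball with \(B \supseteq A\), and let \(\rho\) be a radius of \(B\). Since \(a \in A \subseteq B\), Proposition~\ref{pr1.2}\ref{pr1.2:s1} lets me treat \(a\) as a center of \(B\), so that \(B = B_\rho(a)\). By Proposition~\ref{pr1.2}\ref{pr1.2:s2} the actual radius of \(B\) equals \(\diam B\), and since the actual radius never exceeds the radius, \(\diam B \leqslant \rho\). On the other hand, \(A \subseteq B\) forces \(\diam A \leqslant \diam B\) by monotonicity of the diameter. Chaining these gives \(r = \diam A \leqslant \diam B \leqslant \rho\). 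Now take any \(x \in B_r(a)\); then \(d(x,a) \leqslant r \leqslant \rho\), so \(x \in B_\rho(a) = B\). Thus \(B_r(a) \subseteq B\), which is precisely the implication~\eqref{d1.4:e1}.

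The argument is short and essentially routine; the only point that needs care is the bookkeeping on radii, namely distinguishing the (possibly non-canonical) radius \(\rho\) of \(B\) from its actual radius and diameter in order to assemble \(r \leqslant \rho\). An alternative that avoids referring to \(\rho\) is to observe that \(a \in B_r(a) \cap B\), so Proposition~\ref{p1.8}\ref{p1.8:s2} yields \(B_r(a) \subseteq B\) or \(B \subseteq B_r(a)\); in the second case the chain \(\diam A \leqslant \diam B \leqslant \diam B_r(a) \leqslant r = \diam A\) forces equality of the diameters, whence Proposition~\ref{p1.8}\ref{p1.8:s3} gives \(B = B_r(a)\). In either case \(B_r(a) \subseteq B\), as required.
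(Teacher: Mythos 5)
Your proof is correct. Note that the paper actually states Proposition~\ref{p1.5} \emph{without} proof, treating it as an immediate consequence of~\eqref{eq1.1} and Proposition~\ref{pr1.2}; your argument supplies exactly this intended reasoning, and your bookkeeping distinguishing the radius \(\rho\) of \(B\) from its actual radius (equal to \(\diam B\) by Proposition~\ref{pr1.2}\ref{pr1.2:s2}) correctly closes the one step that needed care, namely \(r = \diam A \leqslant \diam B \leqslant \rho\). Your alternative route via Proposition~\ref{p1.8}\ref{p1.8:s2} and~\ref{p1.8:s3} is also sound and matches the inclusion-dichotomy machinery the paper itself uses elsewhere (e.g., in the proof of Lemma~\ref{l2.7}).
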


\begin{corollary}\label{c1.6}
Let \((X, d)\) be an ultrametric space and let \(A\) be a bounded nonempty subset of \(X\). A ball \(B^* \in \BB_X\) is the smallest ball containing \(A\) if and only if \(B^* \supseteq A\) and the implication
\[
(B \supseteq A) \Rightarrow (\diam B \geqslant \diam B^*)
\]
holds for every \(B \in \BB_X\).
\end{corollary}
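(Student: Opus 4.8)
The plan is to verify the two implications of the equivalence separately, relying on the definition of the smallest ball (Definition~\ref{d1.4}) together with the structural facts in Proposition~\ref{p1.8}.

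For the necessity (the ``only if'' direction), suppose \(B^*\) is the smallest ball containing \(A\). Then \(B^* \supseteq A\) holds by definition, so the first half of the right-hand condition is immediate. Given any \(B \in \BB_X\) with \(B \supseteq A\), the defining implication~\eqref{d1.4:e1} yields \(B \supseteq B^*\). Since \(\diam\) is a supremum of \(d\) over pairs drawn from the set, it is monotone with respect to inclusion, and hence \(B \supseteq B^*\) gives \(\diam B \geqslant \diam B^*\). This settles necessity and uses nothing beyond the definition and monotonicity of the diameter.

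For the sufficiency, suppose \(B^* \supseteq A\) and that the stated diameter implication holds; I must show that \(B^*\) satisfies~\eqref{d1.4:e1}, i.e., that \(B \supseteq A\) forces \(B \supseteq B^*\) for every \(B \in \BB_X\). Fix such a \(B\). Since \(A\) is nonempty and both \(B\) and \(B^*\) contain \(A\), we have \(B \cap B^* \supseteq A \neq \varnothing\), so Proposition~\ref{p1.8}\ref{p1.8:s2} gives the dichotomy \(B \subseteq B^*\) or \(B^* \subseteq B\). In the second case we are done. The crux is to absorb the first case: if \(B \subseteq B^*\), then monotonicity of the diameter gives \(\diam B \leqslant \diam B^*\), while the hypothesis applied to \(B \supseteq A\) gives \(\diam B \geqslant \diam B^*\); hence \(\diam B = \diam B^*\). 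As \(B \cap B^* \neq \varnothing\) as well, Proposition~\ref{p1.8}\ref{p1.8:s3} forces \(B = B^*\), and in particular \(B \supseteq B^*\).

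The only genuinely delicate point is the handling of the ``wrong'' alternative \(B \subseteq B^*\) in the dichotomy, and the mechanism that resolves it is precisely the characterization of ball equality by nonempty intersection plus equal diameters in Proposition~\ref{p1.8}\ref{p1.8:s3}. As an alternative route I could shortcut the sufficiency by invoking Proposition~\ref{p1.5}: the ball \(B_0 = B_{\diam A}(a)\) is already known to be the smallest ball containing \(A\); then applying the diameter hypothesis to \(B_0 \supseteq A\) and the defining property of \(B_0\) to \(B^* \supseteq A\) yields \(\diam B^* = \diam B_0\) together with \(B^* \cap B_0 \neq \varnothing\), whence \(B^* = B_0\) by Proposition~\ref{p1.8}\ref{p1.8:s3}, so \(B^*\) is the smallest ball.
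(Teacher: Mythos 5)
Your proof is correct. Note that the paper does not actually print a proof of Corollary~\ref{c1.6}: it is stated as an immediate consequence of Definition~\ref{d1.4} and Proposition~\ref{p1.5}, and the intended derivation is essentially the shortcut you sketch at the end --- Proposition~\ref{p1.5} supplies the smallest ball \(B_0 = B_{\diam A}(a)\), your two hypotheses force \(B_0 \subseteq B^*\) and \(\diam B_0 \geqslant \diam B^*\), whence \(\diam B_0 = \diam B^*\) and \(B^* = B_0\) by Proposition~\ref{p1.8}\,\ref{p1.8:s3}. Your primary argument takes a genuinely different, and in one respect more economical, route: it never invokes the existence result of Proposition~\ref{p1.5}, but verifies implication~\eqref{d1.4:e1} directly from the dichotomy of Proposition~\ref{p1.8}\,\ref{p1.8:s2} and the equality criterion of Proposition~\ref{p1.8}\,\ref{p1.8:s3}, with monotonicity of \(\diam\) under inclusion absorbing the ``wrong'' alternative \(B \subseteq B^*\). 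What this buys is independence from the existence statement: your version shows that the diameter-minimality condition is equivalent to Definition~\ref{d1.4} for any ball \(B^* \supseteq A\), whether or not one already knows that a smallest ball containing \(A\) exists; the price is a small case analysis that the existence-based route avoids. Both of your arguments are valid in arbitrary (not only finite) ultrametric spaces, since Propositions~\ref{p1.5} and~\ref{p1.8} are stated in that generality, which matches the hypotheses of the corollary.
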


\begin{example}\label{ex1.5}
Let $(X, d)$ be an ultrametric space, $|X| \geqslant 2$, and let $a$, $b \in X$ such that
\[
\diam X = d(a, b).
\]
Then the smallest ball $B$ containing the set $\{a, b\}$ coincides with $X$.
\end{example}

There are metric spaces $(X, d)$ which are not ultrametric but for every bounded $A \subseteq X$ we can find the smallest $B \in \BB_X$ such that $A \subseteq B$.

\begin{example}\label{ex1.7}
Let $(\RR, d)$ be the metric space of all real numbers with the standard metric. The balls in $(\RR, d)$ are the finite closed intervals $[a, b]$, with $a \leqslant b$. The nonempty intersection of two bounded closed intervals in $\RR$ is also a bounded closed interval in \(\RR\). If $A \subseteq \RR$ is bounded and \(A \neq \varnothing\) and $a^* = \inf A$ and $b^* = \sup A$, then $[a^*, b^*]$ is the smallest ball containing the set $A$. Note also that \((\RR, d)\) satisfies condition~\ref{p1.8:s1} of Proposition~\ref{p1.8}.
\end{example}

\begin{figure}[htb]
\begin{tikzpicture}[scale=1]
\def\xx{0cm};
\def\dx{2cm};
\draw (\xx-\dx, 0) -- (\xx-0.25*\dx, 0);
\draw (\xx+0.25*\dx, 0) -- (\xx+\dx, 0);
\draw (\xx,0) circle (\dx);
\node at (\xx,0) [label=below:\(A\)] {};
\node at (\xx+\dx,\dx) [label=left:\(B\)] {};

\def\xx{6cm};
\draw (\xx-\dx, 0) -- (\xx-0.25*\dx, 0);
\draw (\xx+0.25*\dx, 0) -- (\xx+\dx, 0);
\draw (\xx+\dx,0) arc (-20:340:{\dx/cos(20)});
\draw (\xx+\dx,0) arc (20:380:{\dx/cos(20)});
\node at (\xx,0) [label=below:\(A\)] {};
\node at (\xx,0.6*\dx) [label=above:\(L\)] {};
\end{tikzpicture}
\caption{\(A\) is the union of two closed bounded intervals. The ball \(B\) contains \(A\) and has the minimal diameter. The lens \(L\) is the intersection of two balls containing \(A\) and it is a proper subset of \(B\).}
\label{fig1}
\end{figure}
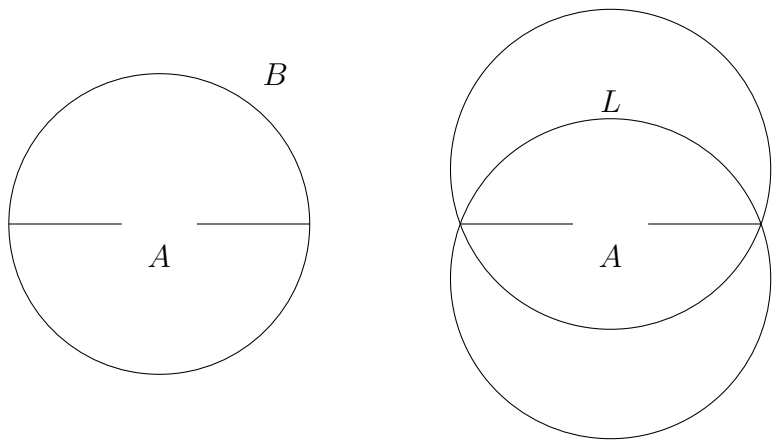

The classical Jung theorem~\cite{Jung1901} says that every nonempty bounded subset \(A\) of the \(n\)-dimensional Euclidean space \(E^n\) is contained in the unique closed ball \(B\) with the minimal diameter (see, for example, \cite{DGK1963} for the proof of Jung theorem). It should be noted that for every \(n \geqslant 2\) we can find a bonded set \(A \subseteq E^n\) such that if \(B\) is the unique ball which contains \(A\) and has the minimal diameter, then \(B\) is not the smallest ball containing \(A\) in the sense of Definition~\ref{d1.4} (see Figure~\ref{fig1}).

\section{Representing trees of finite ultrametric spaces}

Let us recall some concepts from the graph theory.

A \textit{graph} is a pair $(V, E)$ consisting of a nonempty set $V$ and a (probably empty) set $E$ of unordered pairs of different points from $V$. For a graph $G=(V,E)$, the sets $V=V(G)$ and $E=E(G)$ are called \textit{the set of vertices} and \textit{the set of edges}, respectively.

A graph \(G\) together with a function \(l \colon V(G) \to \mathbb{R}^{+}\) is called to be a \emph{labeled} graph, and \(l\) is a \emph{labeling} of \(G\).

A graph \(H\) is a \emph{subgraph} of a graph \(G\) if \(V(H) \subseteq V(G)\) and \(E(H) \subseteq E(G)\). In this case we write \(H \subseteq G\). A \emph{path} is a graph $P$ for which
$$
V(P) = \{x_0,x_1,\ldots,x_k\} \quad \text{and} \quad E(P) = \{\{x_0,x_1\},\ldots,\{x_{k-1},x_k\}\},
$$
where all $x_i$ are distinct. A finite graph $C$ is a \textit{cycle} if $|V(C)|\geq 3$ and there exists an enumeration $(v_1, v_2, \ldots, v_n)$ of its vertices such that
\begin{equation*}
(\{v_i,v_j\}\in E(C))\Leftrightarrow (|i-j|=1\quad \mbox{or}\quad |i-j|=n-1).
\end{equation*}

A connected graph without cycles is called a \emph{tree}. A vertex \(v\) of a tree $T$ is a \emph{leaf} if the \emph{degree} \(\delta(v)\) of $v$ is less than two,
\[
\delta(v) = |\{u \in V(T) \colon \{u, v\} \in E(T)\}| < 2.
\]
If a vertex $v \in V(T)$ is not a leaf of $T$, then we say that $v$ is an \emph{internal node} of $T$. A tree $T$ may have a distinguished vertex $r$ called the \emph{root}; in this case $T$ is a \emph{rooted tree} and we write $T=T(r)$. If $v$ is a vertex of a rooted tree $T = T(r)$ such that $v \neq r$, then there is a unique path $P_v \subseteq T$ which joins $v$ and $r$. A vertex $u$ of $T$ is a \emph{predecessor} of $v$ if $v \neq u$ and $u \in V(P_v)$. In this case we say that $v$ is a \emph{successor} of $u$. In particular, $v$ is a \emph{direct successor} of $u$ if $u$ and $v$ are adjacent, $\{u, v\} \in E(T)$, and $v$ is a successor of $u$.

\begin{definition}\label{d2.1}
Let $k\geqslant 2$. A graph $G$ is called \emph{complete $k$-partite} if its vertices can be divided into $k$ disjoint nonempty sets $X_1,\ldots,X_k$ so that there are no edges joining the vertices of the same set $X_i$ and every two vertices from distinct \(X_i\) and \(X_j\), $1\leqslant i, j \leqslant k$ are adjacent. In this case we write $G=G[X_1,\ldots,X_k]$.
\end{definition}
We shall say that $G$ is a {\it complete multipartite graph} if there exists $k \geqslant 2$ such that $G$ is complete $k$-partite, cf. \cite{Di}.

\begin{definition}[\cite{DDP(P-adic)}]\label{d2.2}
Let $(X,d)$ be a finite ultrametric space with \(|X| \geqslant 2\). Define a graph $G_{D, X}$ as $V(G_{D, X})=X$ and
$$
(\{u,v\}\in E(G_{D, X}))\Leftrightarrow(d(u,v)=\diam X)
$$
for all \(u\), \(v \in V(G_{D, X})\). We call $G_{D, X}$ the \emph{diametrical graph} of $X$.
\end{definition}

\begin{theorem}[\cite{DDP(P-adic)}]\label{th2.3}
Let $(X, d)$ be a finite ultrametric space, $|X|\geqslant 2$. Then $G_{D, X}$ is complete multipartite.
\end{theorem}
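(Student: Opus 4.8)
The plan is to reduce the claim to a statement about a certain equivalence relation on $X$. Recall that a graph $G$ is complete multipartite if and only if its complement is a disjoint union of complete graphs; equivalently, $G$ is complete multipartite precisely when the relation ``$u$ and $v$ are \emph{not} adjacent in $G$'' (where every vertex is regarded as non-adjacent to itself) is an equivalence relation on $V(G)$, the parts $X_1, \dots, X_k$ of the multipartite decomposition being exactly its equivalence classes. So I would define, for $u$, $v \in X$,
\[
u \sim v \quad :\Longleftrightarrow \quad u = v \text{ or } d(u, v) < \diam X,
\]
and verify that $\sim$ is an equivalence relation.

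First I would check reflexivity and symmetry, which are immediate: $u \sim u$ holds by definition, and $\sim$ is symmetric because $d$ is symmetric. The heart of the argument is transitivity. Suppose $u \sim v$ and $v \sim w$. If two of the three points coincide, the conclusion $u \sim w$ is trivial, so assume $u$, $v$, $w$ are pairwise distinct; then $d(u, v) < \diam X$ and $d(v, w) < \diam X$. Applying the strong triangle inequality~\eqref{eq1.2} gives
\[
d(u, w) \leqslant \max\{d(u, v), d(v, w)\} < \diam X,
\]
so $u \sim w$. This single use of~\eqref{eq1.2} is the only nontrivial step, and I expect it to be the crux: non-adjacency in the diametrical graph is transitive exactly because the max-inequality forces the distance to stay strictly below the diameter.

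It remains to read off the multipartite structure from the equivalence classes $X_1, \dots, X_k$ of $\sim$. By the definition of $\sim$, two distinct points in the same class satisfy $d(u,v) < \diam X$ and hence are non-adjacent in $G_{D, X}$, so no part contains an edge. Conversely, two points lying in different classes are distinct and satisfy $u \not\sim v$, which for distinct points means $d(u, v) = \diam X$; thus every pair across two different parts is an edge. Finally I would confirm that $k \geqslant 2$: since $X$ is finite with $|X| \geqslant 2$, the supremum in~\eqref{eq1.3} is attained by some pair of distinct points, giving at least one edge in $G_{D, X}$ and therefore at least two equivalence classes. This yields $G_{D, X} = G[X_1, \dots, X_k]$, so $G_{D, X}$ is complete multipartite.
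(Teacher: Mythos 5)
Your proposal is correct, and it matches the expected argument: the paper itself states Theorem~\ref{th2.3} without proof (citing~\cite{DDP(P-adic)}), and the standard proof there is precisely your observation that non-adjacency in \(G_{D, X}\) is an equivalence relation because the strong triangle inequality~\eqref{eq1.2} forces transitivity of the relation \(d(u, v) < \diam X\). Your closing step, using finiteness of \(X\) and \(|X| \geqslant 2\) to show the diameter is attained and hence \(k \geqslant 2\), is also exactly what is needed to meet Definition~\ref{d2.1}.
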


With every finite nonempty ultrametric space $(X, d)$, we can associate a labeled rooted tree $T_X$ by the following rule (see~\cite{PD(UMB)}). The root of \(T_X\) is the set \(X\). If $X$ is a one-point set, then $T_X$ is the rooted tree consisting of one node \(X\) with the label \(0\). Let $|X| \geqslant 2$. According to Theorem~\ref{th2.3} we have $G_{D, X} = G_{D, X}[X_1, \ldots, X_k]$. In this case the root of the tree $T_X$ is labeled by $l(X) = \diam X$ and, moreover, $T_X$ has the vertices $X_1, \ldots, X_k$, \(k \geqslant 2\), of the first level with the labels
\begin{equation}\label{eq2.1}
l(X_i) = \diam X_i,
\end{equation}
$i = 1,\ldots,k$. The vertices of the first level labeled by \(0\) are leaves, and those labeled by $\diam X_i > 0$ are internal vertices of the tree $T_X$. If the first level has no internal vertices, then the tree $T_X$ is constructed. Otherwise, by repeating the above-described procedure with $X_i$ corresponding to the internal vertices of the first level, we obtain the vertices of the second level, etc. Since $X$ is a finite set, all vertices on some level will be leaves, and the construction of $T_X$ is completed.

We shall say that the labeled rooted tree \(T_X\) is the \emph{representing tree} of \((X, d)\). The following lemma shows how we can find the distance between points of \(X\) using the labeling \(l \colon V(T_X) \to \mathbb{R}^{+}\).

\begin{lemma}\label{l2.4}
Let $(X, d)$ be a finite ultrametric space and let $x_1$ and $x_2$ be two distinct points of \(X\). If $(\{x_1\}, v_1, \ldots, v_n, \{x_2\})$ is the path joining the leaves $\{x_1\}$ and $\{x_2\}$ in $T_X$, then
\begin{equation}\label{l2.4:e1}
d(x_1, x_2) = \max\limits_{1\leqslant i \leqslant n} l({v}_i).
\end{equation}
\end{lemma}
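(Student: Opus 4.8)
The plan is to single out, among the vertices $v_1,\dots,v_n$, the least common ancestor $w$ of the two leaves $\{x_1\}$ and $\{x_2\}$ in the rooted tree $T_X$ (equivalently, the smallest vertex of $T_X$, with respect to inclusion, that contains both $x_1$ and $x_2$), and then to verify two things: that $l(w)$ realizes the maximum in~\eqref{l2.4:e1}, and that $l(w) = d(x_1, x_2)$. Recall that every vertex of $T_X$ is a subset of $X$, that for a vertex $Y$ with $|Y| \geqslant 2$ the direct successors are precisely the parts $X_1,\dots,X_k$ of the diametrical graph $G_{D, Y}$, and that these parts partition $Y$. The vertices containing a fixed point of $X$ form a descending chain, so a vertex contains $x_1$ if and only if it is an ancestor of $\{x_1\}$; hence the path joining the two leaves ascends from $\{x_1\}$ through its proper ancestors up to $w$ and then descends through the proper ancestors of $\{x_2\}$ to $\{x_2\}$. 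In particular $w$ coincides with one of the $v_i$, while every other $v_i$ is a proper descendant of $w$.

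First I would record a monotonicity fact: if $u$ is a direct successor of $Y$ in $T_X$, then $\diam u < \diam Y$. Indeed, $u$ is one part of the complete multipartite graph $G_{D, Y}$, so no two points of $u$ are adjacent in $G_{D, Y}$; by Definition~\ref{d2.2} this means $d(a, b) \neq \diam Y$ for all $a, b \in u$, and since $d(a, b) \leqslant \diam Y$ always holds, the finiteness of $u$ gives $\diam u < \diam Y$. Iterating along successor edges, every proper descendant of $w$ has diameter strictly smaller than $\diam w$. Because $l(v) = \diam v$ for every internal vertex $v$, it follows that $l(v_i) < l(w)$ for each $v_i \neq w$, whence $\max_{1 \leqslant i \leqslant n} l(v_i) = l(w) = \diam w$.

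It remains to identify $\diam w$ with $d(x_1, x_2)$. Since $w$ is the least common ancestor, $x_1$ and $x_2$ lie in \emph{distinct} direct successors $Y_i \ni x_1$ and $Y_j \ni x_2$ of $w$: were they in a common part, that part would be a smaller vertex of $T_X$ containing both points, contradicting the choice of $w$. As $Y_i$ and $Y_j$ are different parts of the complete multipartite graph $G_{D, w}$, the points $x_1$ and $x_2$ are adjacent in $G_{D, w}$, so Definition~\ref{d2.2} yields $d(x_1, x_2) = \diam w$. Combining this with the previous paragraph gives~\eqref{l2.4:e1}.

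The step I expect to demand the most care is the bookkeeping that converts the purely set-theoretic tree structure into statements about distances — in particular, confirming that the least common ancestor is genuinely the smallest vertex of $T_X$ containing both points and that its direct successors separate $x_1$ from $x_2$. Once the two facts about the diametrical graph are in hand (points in distinct parts lie at distance exactly $\diam w$, while points in the same part lie at strictly smaller distance), the formula follows at once, and the argument uses only the construction of $T_X$ together with Theorem~\ref{th2.3}, without invoking Theorem~\ref{th2.6}.
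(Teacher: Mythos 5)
Your proof is correct. Note that the paper does not actually prove Lemma~\ref{l2.4} in the text: it cites \cite{DP2018}, \cite{DPT(Howrigid)}, \cite{P(TIAMM)} and remarks only that the argument is ``completely similar'' to Lemma~3.2 of \cite{PD(UMB)}, so your least-common-ancestor argument is a self-contained reconstruction of the standard proof those references contain, rather than a different route. All three of your key steps check out: since the direct successors of a vertex partition that vertex, the vertices of \(T_X\) containing a fixed point are exactly the ancestors of the corresponding leaf, so the vertex of maximal label on the path is indeed the least common ancestor \(w\) and every other \(v_i\) is a proper descendant of it; the strict decrease of diameters along successor edges follows, as you argue, from each part of \(G_{D,Y}\) being an independent set together with finiteness (this is the same inequality that appears as~\eqref{l3.4e1} in Theorem~\ref{l3.4}); and minimality of \(w\) forces \(x_1\) and \(x_2\) into distinct parts of \(G_{D,w}\), so that Definition~\ref{d2.2} and Theorem~\ref{th2.3} give \(d(x_1, x_2) = \diam w\) exactly. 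Supplying the explicit proof the paper outsources to the literature, using only the construction of \(T_X\) and Theorem~\ref{th2.3}, is if anything a small improvement in self-containment.
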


This lemma can be found in~\cite[Lemma~1.6]{DP2018}, \cite[Lemma~2.3]{DPT(Howrigid)},  \cite[Lemma~6]{P(TIAMM)}. The proof of the lemma is completely similar to the proof of Lemma~3.2 from~\cite{PD(UMB)}. Note only that for each tree \(T\) and every pair of distinct \(u\), \(v \in V(T)\) there is a single path joining \(u\) and \(v\) in \(T\).

Let $T = T(r)$ be a rooted tree. For every vertex $v$ of $T$ we denote by $T_v$ the induced subtree of $T$ such that \(v\) is the root of \(T_v\) and
\begin{equation}\label{eq2.3}
V(T_v) = \{u \in V(T) \colon u = v \text{ or \(u\) is a successor of } v\}.
\end{equation}
In particular, we have \(T(r) = T_r\) for the case when \(v = r\). We also use the denotation \(\overline{L}(T_v)\) for the set of all leaves of \(T_v\). If \(T = T_X\) is the representing tree of a finite ultrametric space \((X, d)\) and \(v \in V(T_X)\) and
\[
\overline{L}(T_v) = \{\{x_1\}, \ldots, \{x_n\}\},
\]
then, for simplicity we write
\[
L(T_v) = \{x_1, \ldots, x_n\}.
\]
Consequently, the equality
\begin{equation}\label{eq2.4}
v = L(T_v)
\end{equation}
holds for every \(v \in V(T_X)\).

For $T_X$ consisting of one node only, \(V(T_X) = X\), we consider that \(X\) is the root of $T_X$ as well as a leaf of \(T_X\). Thus if \(X = \{x\}\), then we have \(\overline{L}(T_X) = \{\{x\}\}\) and \(L(T_X) = \{x\} = X\).

Let \(A\) and \(B\) be two nonempty bounded subsets of a metric space \((X, d)\). The \emph{Hausdorff distance} \(d_H(A, B)\) between \(A\) and \(B\) is defined by
\begin{equation}\label{eq1.4}
d_H(A, B) := \max\{\sup_{a \in A} \inf_{b \in B} d(a, b), \sup_{b \in B} \inf_{a \in A} d(a, b)\}.
\end{equation}
The definition and some properties of the Hausdorff distance can be found in~\cite{BBI2001}. We note only that if \(\{a\}\) and \(\{b\}\) are singular balls in \((X, d)\), then~\eqref{eq1.4} implies
\begin{equation}\label{eq1.5}
d_H(\{a\}, \{b\}) = d(a, b).
\end{equation}

In the following and related to it Corollary~\ref{c2.8} we put together several propositions from~\cite{P(TIAMM), DPT(Howrigid), DP2019, DPT2015}.

\begin{theorem}\label{th2.6}
Let $(X,d)$ be a finite nonempty ultrametric space with the representing tree $T_X$ and let $B_1$ and $B_2$ be distinct balls in $(X, d)$. Then the following statements are valid.
\begin{enumerate}
\item\label{th2.6:s1} The equality
\begin{equation}\label{th2.6:e0}
V(T_X) = \BB_X
\end{equation}
holds.
\item\label{th2.6:s2} $B_1$ is a direct successor of $B_2$ in \(T_X\) if and only if $B_1 \subset B_2$ and the implication
\begin{equation}\label{th2.6:e1}
(B_1 \subseteq B \subseteq B_2) \Rightarrow (B_1 = B \ \text{or} \  B_2 = B)
\end{equation}
holds for every $B \in \BB_X$.
\item\label{th2.6:s3} If \(P\) is the path joining \(B_1\) and \(B_2\) in \(T_X\), then
\begin{equation}\label{th2.6:e4}
d_H(B_1, B_2) = \max_{u \in V(P)} l(u).
\end{equation}
\end{enumerate}
\end{theorem}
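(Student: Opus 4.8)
The plan is to prove \ref{th2.6:s1} by induction on $|X|$, to deduce \ref{th2.6:s2} from the identity~\eqref{eq2.4} relating vertices to subsets of $X$, and to obtain the Hausdorff formula in \ref{th2.6:s3} by reducing it to the least common ancestor of $B_1$ and $B_2$ in $T_X$.

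For \ref{th2.6:s1} the case $|X| = 1$ is clear, so assume $|X| \geqslant 2$ and write $G_{D, X} = G_{D, X}[X_1, \ldots, X_k]$ as in Theorem~\ref{th2.3}. I would first verify that each part $X_i$ is a ball: the strong triangle inequality forces $d(x, y) < \diam X$ whenever $x$, $y$ lie in the same $X_i$ and $d(x, y) = \diam X$ whenever they lie in distinct parts, so $B_{\diam X_i}(a) = X_i$ for every $a \in X_i$. Consequently every first-level vertex is a ball, and since the subspaces $X_i$ have fewer points, the induction hypothesis together with Corollary~\ref{c1.3}\ref{c1.3:s3} (which turns balls of $X_i$ into balls of $X$) yields $V(T_X) \subseteq \BB_X$. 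For the reverse inclusion, take $B \in \BB_X$: if $\diam B = \diam X$ then Proposition~\ref{p1.8}\ref{p1.8:s3} gives $B = X$, the root; if $\diam B < \diam X$ then the same distance dichotomy places $B$ inside a single part $X_i$, Corollary~\ref{c1.3}\ref{c1.3:s3} exhibits $B$ as a ball of the subspace $X_i$, and the induction hypothesis identifies $B$ with a vertex of $T_{X_i} \subseteq T_X$.

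Statement \ref{th2.6:s2} then follows formally. By the construction of $T_X$ and~\eqref{eq2.4}, a vertex $B_1$ is a successor of $B_2$ exactly when $B_1 \subseteq B_2$, so $B_1 \subsetneq B_2$ is equivalent to $B_1$ being a strict successor of $B_2$. Since by \ref{th2.6:s1} the vertices of $T_X$ are precisely the balls, the non-existence of a vertex strictly between $B_1$ and $B_2$ is exactly the implication~\eqref{th2.6:e1}, and this non-existence is what characterises $B_1$ as a \emph{direct} successor of $B_2$.

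The substance of the theorem is \ref{th2.6:s3}, and here I would first isolate the sub-lemma that for a ball $B$ and a point $b \notin B$ the distance $d(a, b)$ is independent of $a \in B$; this is immediate from the strong triangle inequality since $d(a, a') \leqslant \diam B < d(a, b)$ for all $a$, $a' \in B$. Let $C$ be the least common ancestor of $B_1$ and $B_2$ in $T_X$. By \ref{th2.6:s2} and the containment--ancestry correspondence, $C$ is the smallest ball containing $B_1 \cup B_2$ and is the vertex of largest diameter on the path $P$, whence $\max_{u \in V(P)} l(u) = l(C) = \diam C$. It then remains to prove $d_H(B_1, B_2) = \diam C$. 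If $B_1 \subsetneq B_2$ then $C = B_2$; the sub-lemma reduces~\eqref{eq1.4} to $\sup_{b \in B_2} d(b, B_1)$, which is at most $\diam B_2$ and attains it because a diametral pair of $B_2$ cannot lie entirely in the single part of the diametrical partition of $B_2$ that contains $B_1$. If $B_1$ and $B_2$ are disjoint then they lie in distinct parts of the diametrical partition of $C$, so every cross distance equals $\diam C$ and both suprema in~\eqref{eq1.4} collapse to $\diam C$. I expect this last collapse---forcing the relevant infima and suprema of distances down to $\diam C$---to be the main obstacle, since it is precisely where the ultrametric geometry must be reconciled with the tree order; the surrounding manipulation of~\eqref{eq1.4} is routine.
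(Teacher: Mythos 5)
Your proof is correct, and for statement~\ref{th2.6:s3} --- the only part of Theorem~\ref{th2.6} the paper actually proves in the text --- it follows essentially the paper's route: the paper's Lemma~\ref{l2.7} is exactly your reduction \(d_H(B_1, B_2) = \diam(B_1 \cup B_2)\), proved there by the same diametrical-partition computation you give (place \(B_1\) inside one part of \(G_{D, B_2}\), pick a point in another part to force \(d_H(B_1, B_2) \geqslant \diam B_2\)), and the paper's subsequent passage from \(\diam(B_1 \cup B_2)\) to the maximal label on \(P\), via the smallest ball \(B \supseteq B_1 \cup B_2\) and the paths \(P_1\), \(P_2\), is your least-common-ancestor argument in different clothing. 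There are, however, two genuine differences worth recording. First, the paper does not prove statements~\ref{th2.6:s1} and~\ref{th2.6:s2} at all: it cites \cite[Theorem~1.6]{DP2019}; your induction on \(|X|\) --- each part \(X_i\) of \(G_{D,X}\) is a ball because within-part distances are strictly less than \(\diam X\) while cross-part distances equal \(\diam X\), with Corollary~\ref{c1.3}~\ref{c1.3:s3} transferring balls between \(X\) and the subspaces \(X_i\) --- is a correct self-contained substitute, so on this point your write-up is more complete than the paper. Second, the paper dismisses the disjoint case of Lemma~\ref{l2.7} with ``can be considered similarly,'' whereas you carry it out: by minimality of \(C\) the balls \(B_1\) and \(B_2\) fall into distinct parts of the diametrical partition of \(C\), so every cross distance equals \(\diam C\) and both terms of \eqref{eq1.4} collapse; your closing worry was misplaced, since that collapse is exactly this two-line computation. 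Two minor slips, neither a gap: in your argument for~\ref{th2.6:s2}, ``\(B_1\) is a successor of \(B_2\)'' corresponds to \emph{strict} inclusion \(B_1 \subsetneq B_2\) (distinct vertices of \(T_X\) are distinct subsets, by the recursive partitioning, so this costs nothing but should be said, as should the laminar fact that nested vertices are tree-comparable); and in the nested case of~\ref{th2.6:s3} it is the inclusion \(B_1 \subseteq B_2\), not your sub-lemma, that annihilates the term \(\sup_{a \in B_1} \inf_{b \in B_2} d(a, b)\) in \eqref{eq1.4} --- the sub-lemma only evaluates the surviving infimum.
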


For the proof of statement~\ref{th2.6:s1} and statement~\ref{th2.6:s2} see~\cite[Theorem~1.6]{DP2019}

To prove statement~\ref{th2.6:s3} we need the following lemma which claims that the Hausdorff distance between any two distinct balls \(B_1\) and \(B_2\) of a finite ultrametric spaces coincides with the diameter of the smallest ball \(B\) containing \(B_1\) and \(B_2\).

\begin{lemma}\label{l2.7}
Let \((X, d)\) be a finite nonempty ultrametric space. Then the equality
\begin{equation}\label{l2.7:e1}
d_H(B_1, B_2) = \diam(B_1 \cup B_2)
\end{equation}
holds for all distinct balls \(B_1\), \(B_2 \in \BB_X\).
\end{lemma}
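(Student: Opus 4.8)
The plan is to exploit the finiteness of $(X,d)$, which guarantees that all the suprema and infima in~\eqref{eq1.4} and~\eqref{eq1.3} are attained, together with the dichotomy for balls supplied by Proposition~\ref{p1.8}. Since $B_1$ and $B_2$ are distinct, statement~\ref{p1.8:s1} and statement~\ref{p1.8:s2} leave exactly two possibilities: either $B_1 \cap B_2 = \varnothing$, or, after relabeling, $B_1 \subsetneq B_2$. I would treat these cases separately, and in the nested case I would invoke statement~\ref{p1.8:s3} to deduce that $B_1 \subsetneq B_2$ forces $\diam B_1 < \diam B_2$.

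The technical engine for both cases is the following constancy of external distances, immediate from the strong triangle inequality and~\eqref{eq1.1}: if $B$ is a ball of diameter $r$, if $a \in B$, and if $b \notin B$, then $d(a,b) > r$ and the value $d(a',b)$ does not depend on the choice of $a' \in B$. Indeed, $d(a,b) > r$ because $b$ lies outside $B = B_r(a)$, and then $d(a,a') \leqslant r < d(a,b)$ for every $a' \in B$, so the triangle with vertices $a$, $a'$, $b$ is isosceles with $d(a',b) = d(a,b)$. I would record this as the first step, since it is exactly what converts the inner infima $\inf_{a \in B_1} d(a,b)$ into a single computable distance.

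For the disjoint case, applying the observation to each ball in turn shows that there is a constant $\rho$ with $d(a,b) = \rho$ for all $a \in B_1$, $b \in B_2$, and moreover $\rho > \max\{\diam B_1, \diam B_2\}$. Hence both inner infima in~\eqref{eq1.4} equal $\rho$, giving $d_H(B_1,B_2) = \rho$; on the other hand $\diam(B_1 \cup B_2) = \max\{\diam B_1, \diam B_2, \rho\} = \rho$, so the two sides agree. For the nested case $B_1 \subsetneq B_2$ one has $B_1 \cup B_2 = B_2$, so the right-hand side is $\diam B_2$. The first inner supremum in~\eqref{eq1.4} vanishes because $B_1 \subseteq B_2$. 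For the second, fix $a_0 \in B_1$; the observation gives $\inf_{a \in B_1} d(a,b) = d(a_0,b)$ for every $b \in B_2 \setminus B_1$, while this infimum is $0$ for $b \in B_1$. Since $a_0 \in B_2$, equality~\eqref{eq1.1} yields $\sup_{b \in B_2} d(a_0,b) = \diam B_2$, and because $\diam B_1 < \diam B_2$ a diameter-realizing point of $B_2$ necessarily lies in $B_2 \setminus B_1$; thus the second supremum equals $\diam B_2$ as well, and $d_H(B_1,B_2) = \diam B_2 = \diam(B_1 \cup B_2)$.

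I expect the main obstacle to be the nested case, specifically the verification that $\sup_{b \in B_2} \inf_{a \in B_1} d(a,b)$ actually attains $\diam B_2$ rather than some smaller value: this is where one must combine the constancy observation with the strict inequality $\diam B_1 < \diam B_2$ coming from statement~\ref{p1.8:s3} of Proposition~\ref{p1.8}, so as to guarantee that a diameter-realizing point of $B_2$ falls outside $B_1$. The disjoint case and the bookkeeping with~\eqref{eq1.4} are routine by comparison.
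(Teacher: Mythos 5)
Your proof is correct, but it takes a genuinely different route from the paper's. The paper obtains the key inequality $d_H(B_1,B_2)\geqslant\diam(B_1\cup B_2)$ in the nested case through the representing-tree machinery: statements \ref{th2.6:s1} and \ref{th2.6:s2} of Theorem~\ref{th2.6} place $B_1$ inside one part $B_2^1$ of the complete multipartite diametrical graph $G_{D,B_2}=G_{D,B_2}[B_2^1,\ldots,B_2^m]$ of $B_2$ (Theorem~\ref{th2.3}), so that every point of $B_2^2$ lies at distance $\diam B_2$ from every point of $B_1\subseteq B_2^1$, which forces $\sup_{a\in B_2}\inf_{b\in B_1}d(a,b)\geqslant\diam B_2$; the disjoint case is only indicated as ``similar''. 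You instead work directly from definition~\eqref{eq1.4}, using the elementary isosceles observation that for $a\in B$ and $b\notin B$ one has $d(a,b)>\diam B$ and $d(\,\cdot\,,b)$ is constant on $B$ (this rests on the identity $B=B_{\diam B}(a)$, i.e.\ on Proposition~\ref{pr1.2} and Corollary~\ref{c1.3}, which are available at this point), together with the strict inequality $\diam B_1<\diam B_2$ from statement~\ref{p1.8:s3} of Proposition~\ref{p1.8}, which guarantees that a point realizing $\diam B_2=\sup_{b\in B_2}d(a_0,b)$ (attained by finiteness) falls in $B_2\setminus B_1$. Your route buys three things: it is self-contained, avoiding any appeal to Theorem~\ref{th2.6} (whose statement~\ref{th2.6:s3} is precisely what this lemma is used to prove, so the chain of dependencies becomes shorter); it treats the disjoint case in full rather than by analogy; and since finiteness enters only to realize the diameter at a single point, it adapts via an $\varepsilon$-approximation to arbitrary ultrametric spaces, consistent with Remark~\ref{r2.7}. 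The paper's route, in exchange, exhibits the Hausdorff distance concretely through the multipartite structure of the diametrical graph and reuses machinery already developed for the rest of the paper.
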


\begin{proof}
Let \(B_1\), \(B_2 \in \BB_X\) and let \(B_1 \neq B_2\). It follows directly from~\eqref{eq1.4} and~\eqref{eq1.3} that the inequality
\[
d_H(B_1, B_2) \leqslant \diam(B_1 \cup B_2)
\]
holds. Consequently, it suffices to show
\begin{equation}\label{l2.7:e2}
d_H(B_1, B_2) \geqslant \diam(B_1 \cup B_2).
\end{equation}
Suppose first that there is a point \(b \in B_1\cap B_2\). Condition~\ref{p1.8:s2} of Proposition~\ref{p1.8} implies
\[
B_1 \subset B_2 \quad \text{or} \quad B_2 \subset B_1.
\]
Without loss of generality, we assume that \(B_1 \subset B_2\). Then
\begin{equation}\label{l2.7:e3}
\diam (B_1 \cup B_2) = \diam B_2
\end{equation}
and, by statement~\ref{p1.8:s3} of Proposition~\ref{p1.8}, the inequality
\[
\diam B_2 > \diam B_1
\]
holds.

By statement~\ref{th2.6:s1} of Theorem~\ref{th2.6} we have \(B_1\), \(B_2 \in V(T_X)\), where \(T_X\) is the representing tree of \((X, d)\).

Let \(G_{D, B_2} = G_{D, B_2}[B_2^1, \ldots, B_2^m]\) be the diametrical graph of \(B_2\). Using statement~~\ref{th2.6:s2} of Theorem~\ref{th2.6} and the inclusion \(B_1 \subset B_2\) we can find \(j \in \{1, \ldots, m\}\) such that
\[
B_1 \subseteq B_2^j.
\]
Without loss of generality we set
\begin{equation}\label{l2.7:e4}
B_1 \subseteq B_2^1.
\end{equation}
It follows from~\eqref{eq1.4}, \eqref{l2.7:e4}, Theorem~\ref{th2.3} and the equality
\[
B_2 = \bigcup_{j=1}^{m} B_2^{j},
\]
that
\begin{equation}\label{l2.7:e5}
d_H(B_1, B_2) \geqslant \sup_{a \in B_2} \inf_{b \in B_1} d(a, b) \\
\geqslant \sup_{a \in B_2^2} \inf_{b \in B_2^1} d(a, b) = \diam B_2.
\end{equation}
Now~\eqref{l2.7:e2} follows from~\eqref{l2.7:e5} and~\eqref{l2.7:e3}. The case when \(B_1\) and \(B_2\) are disjoint can be considered similarly.
\end{proof}

\begin{remark}\label{r2.7}
Lemma~\ref{l2.7} can also be obtained from Lemma~2.2 of~\cite{QD2014} and it holds for arbitrary ultrametric space \((X, d)\) but in the present paper we consider only finite \((X, d)\).
\end{remark}

\begin{proof}[Proof of statement~\ref{th2.6:s3} from Theorem~\(\ref{th2.6}\).] Let \(P\) be the unique path joining \(B_1\) and \(B_2\) in \(T_X\). By Lemma~\ref{l2.7}, we have the equality
\begin{equation}\label{th2.6:e5}
d_H(B_1, B_2) = \diam (B_1 \cup B_2).
\end{equation}
Using Proposition~\ref{p1.5}, we can find the smallest ball \(B \in \BB_X\) such that
\begin{equation}\label{th2.6:e6}
B \supseteq B_1 \cup B_2.
\end{equation}
The ball \(B\) is unique and the equality
\begin{equation}\label{th2.6:e7}
\diam B = \diam (B_1 \cup B_2)
\end{equation}
holds. Inclusion~\eqref{th2.6:e6} holds if and only if
\[
B \supseteq B_1 \quad \text{and} \quad B \supseteq B_2.
\]
Let \(P\) be the path joining \(B_1\) and \(B_2\) in \(T_X\) and let \(P_i\) be the path joining \(B_i\) and \(B\) in \(T_X\), \(i = 1\), \(2\). From the already proven statement~\ref{th2.6:s2} and the uniqueness of the paths \(P_1\), \(P_2\) and \(P\) it follows that
\[
P \subseteq P_1 \cup P_2,
\]
i.e., we have
\begin{equation}\label{th2.6:e8}
V(P) \subseteq V(P_1) \cup V(P_2) \quad \text{and} \quad E(P) \subseteq E(P_1) \cup E(P_2).
\end{equation}
If \(u \in V(P_i)\), \(i = 1\), \(2\), and \(u \neq B\), then \(u\) is a successor of \(B\), that implies the inequality
\[
l(u) < l(B).
\]
Equality~\eqref{th2.6:e4} follows from the last inequality and \eqref{th2.6:e5}, \eqref{th2.6:e7}, \eqref{th2.6:e8}.
\end{proof}

It is well-known that the Hausdorff distance \(d_H\) is a metric on the space of all bounded, closed, nonempty subsets of arbitrary metric space \((X, d)\) (see, for example, \cite[Proposition~7.3.3]{BBI2001} ).

\begin{corollary}\label{c2.8}
Let \((X, d)\) be a finite nonempty ultrametric space, let \(\BB_X\) be the ballean of \((X, d)\) and let \(d_H\) be the restriction of the Hausdorff distance on \(\BB_X\). Then the metric space \((\BB_X, d_H)\) is ultrametric.
\end{corollary}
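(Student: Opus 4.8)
The plan is to reduce the claim to verifying the strong triangle inequality for \(d_H\) on \(\BB_X\). Indeed, the remark preceding the corollary already guarantees that \(d_H\) is a metric on the set of all bounded, closed, nonempty subsets of any metric space, and in a finite space every ball is bounded, closed and nonempty. Hence \(d_H\) restricted to \(\BB_X\) is automatically a metric, and it only remains to show
\[
d_H(B_1, B_3) \leqslant \max\{d_H(B_1, B_2), d_H(B_2, B_3)\}
\]
for all \(B_1\), \(B_2\), \(B_3 \in \BB_X\).

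First I would dispose of the degenerate cases. If \(B_1 = B_3\), the left-hand side is \(0\) and the inequality is trivial; if \(B_1 = B_2\) or \(B_2 = B_3\), then one of the two distances on the right equals \(0\) and the inequality reduces to an equality between the two surviving distances. Thus it suffices to treat the case of three pairwise distinct balls.

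For pairwise distinct balls I would invoke Lemma~\ref{l2.7} to replace each Hausdorff distance by the diameter of the corresponding union, so that the required inequality becomes
\[
\diam(B_1 \cup B_3) \leqslant \max\{\diam(B_1 \cup B_2), \diam(B_2 \cup B_3)\}.
\]
To prove this, take arbitrary \(x\), \(y \in B_1 \cup B_3\) and bound \(d(x, y)\). If \(x\) and \(y\) both lie in \(B_1\), then \(d(x,y) \leqslant \diam B_1 \leqslant \diam(B_1 \cup B_2)\), and symmetrically if both lie in \(B_3\). The essential case is \(x \in B_1\) and \(y \in B_3\): choosing any \(z \in B_2\), which is possible since \(B_2 \neq \varnothing\), the strong triangle inequality gives \(d(x, y) \leqslant \max\{d(x, z), d(z, y)\}\), while \(d(x, z) \leqslant \diam(B_1 \cup B_2)\) and \(d(z, y) \leqslant \diam(B_2 \cup B_3)\). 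Passing to the supremum over \(x\), \(y\) yields the inequality for diameters, and then Lemma~\ref{l2.7} translates it back into the strong triangle inequality for \(d_H\).

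The argument has no genuine obstacle; the single point requiring care is the mixed case \(x \in B_1\), \(y \in B_3\), where the nonemptiness of the ``middle'' ball \(B_2\) is exactly what lets me route the estimate through a point \(z \in B_2\) and apply the strong triangle inequality of \(d\). I note that an alternative route would use statement~\ref{th2.6:s3} of Theorem~\ref{th2.6}, expressing \(d_H\) as the maximal label along the path joining the two balls in \(T_X\) and then using that, for any three vertices of a tree, the path between two of them lies in the union of their paths to the third; this delivers the strong triangle inequality directly, but the diameter computation above is more elementary and self-contained.
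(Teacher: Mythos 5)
Your proof is correct, but it takes a genuinely different route from the paper. The paper proves the strong triangle inequality for \(d_H\) through the representing tree: by statement~\ref{th2.6:s1} of Theorem~\ref{th2.6} the balls \(B_1\), \(B_2\), \(B_3\) are vertices of \(T_X\), the path \(P\) joining \(B_1\) and \(B_2\) satisfies \(V(P) \subseteq V(P_1) \cup V(P_2)\) where \(P_i\) joins \(B_i\) and \(B_3\), and then the maximal-label formula~\eqref{th2.6:e4} of statement~\ref{th2.6:s3} immediately yields the inequality --- precisely the alternative you sketch in your closing remark. You instead use only Lemma~\ref{l2.7} to convert each Hausdorff distance into \(\diam(B_i \cup B_j)\) and then verify the elementary inequality \(\diam(B_1 \cup B_3) \leqslant \max\{\diam(B_1 \cup B_2), \diam(B_2 \cup B_3)\}\) by routing the mixed case \(x \in B_1\), \(y \in B_3\) through an arbitrary \(z \in B_2\) via the strong triangle inequality of \(d\); your handling of the degenerate cases (noting that \(B_1 = B_2\) forces equality of the two surviving distances) is also accurate. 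What each approach buys: the paper's tree argument is shorter given the machinery of Theorem~\ref{th2.6} already in place and fits the paper's overall theme, but it is tied to finite spaces; your diameter argument never mentions trees, uses the balls only through Lemma~\ref{l2.7} (and in fact works for arbitrary nonempty bounded subsets in place of balls in the diameter step), so combined with the general form of Lemma~\ref{l2.7} noted in Remark~\ref{r2.7} it extends verbatim to arbitrary ultrametric spaces, in line with Remark~\ref{r2.9} --- a genuine gain in generality that the paper's own proof does not provide.
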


\begin{proof}
It suffices to show that the strong triangle inequality
\begin{equation}\label{c2.8:e1}
d_H(B_1, B_2) \leqslant \max\{d_H(B_1, B_3), d_H(B_2, B_3)\}
\end{equation}
holds for all \(B_1\), \(B_2\), \(B_3 \in \BB_X\). Inequality~\eqref{c2.8:e1} is trivial if \(B_1 = B_2\) or \(B_1 = B_3\) or \(B_2 = B_3\). Suppose \(B_1\), \(B_2\), \(B_3\) are distinct. By statement~\ref{th2.6:s1} of Theorem~\ref{th2.6} we have \(B_1\), \(B_2\), \(B_3 \in V(T_X)\), where \(T_X\) is the representing tree of \((X, d)\). Let \(P\) be the unique path joining \(B_1\) and \(B_2\) in \(T_X\) and let \(P_i\), \(i = 1\), \(2\), be the unique path joining \(B_i\) and \(B_3\) in \(T_X\). Then we have
\begin{equation}\label{c2.8:e2}
V(P) \subseteq V(P_1) \cup V(P_2).
\end{equation}
Now inequality~\eqref{c2.8:e1} follows from~\eqref{c2.8:e2} and statement~\ref{th2.6:s3} of Theorem~\ref{th2.6}.
\end{proof}

\begin{remark}\label{r2.9}
Corollary~\ref{c2.8} remains valid for arbitrary nonempty ultrametric space \((X, d)\), that follows from Lemma~2.4 of \cite{QD2014}.
\end{remark}

\begin{remark}\label{r2.5}
Considering the minimal spanning tree for the compete weighted graph generated by ultrametric space \((\mathbf{B}_X, d_{H})\) (see, for example, Theorem~1 in paper \cite{GurVyal(2012)}) we obtain a natural dual form of Lemma~\ref{l2.4}.
\end{remark}

Since, for a finite nonempty ultrametric space \((X, d)\), the metric space \((\BB_X, d_H)\) is also finite and ultrametric, we can construct the representing tree \(T_{\BB_X}\). In the next section of the paper, starting from the representing tree \(T_X\), we give a simple inductive rule for constructing all the trees belonging to the sequence \(T_X\) \(T_{\BB_X}\), \(T_{\BB_{\BB_X}}\), \(T_{\BB_{\BB_{\BB_X}}}\), \(\ldots\).

\begin{lemma}\label{l2.9}
Let \((X, d)\) be a finite ultrametric space and let
\[
G_{D, X} = G_{D, X}[X_1, \ldots, X_k], \quad k \geqslant 2,
\]
be the diametrical graph of \((X, d)\). Then the diametrical graph \(G_{D, \BB_X}\) of the ultrametric space \((\BB_X, d_H)\) is the complete \((k+1)\)-partite graph,
\begin{equation}\label{l2.9:e1}
G_{D, \BB_X} = G_{D, \BB_X}[\{X\}, \BB_{X_1}, \ldots, \BB_{X_k}]
\end{equation}
where \(\BB_{X_1}\), \(\ldots\), \(\BB_{X_k}\) are the balleans of the ultrametric spaces \((X_1, d|_{X_1 \times X_1})\), \(\ldots\), \((X_1, d|_{X_k \times X_k})\).
\end{lemma}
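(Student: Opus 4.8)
The plan is to realize the vertex set $\BB_X$ of $G_{D, \BB_X}$ as the disjoint union $\BB_X = \{X\} \sqcup \BB_{X_1} \sqcup \cdots \sqcup \BB_{X_k}$ and then to show that two balls are adjacent in $G_{D, \BB_X}$ exactly when they lie in distinct blocks of this partition; by Definition~\ref{d2.1} this is precisely the assertion~\eqref{l2.9:e1}. As a first step I would compute the diameter of $(\BB_X, d_H)$. Since $X$ is itself a ball, Lemma~\ref{l2.7} gives $d_H(X, B) = \diam(X \cup B) = \diam X$ for every ball $B \neq X$, while $d_H(B_1, B_2) = \diam(B_1 \cup B_2) \leqslant \diam X$ for all balls $B_1$, $B_2$. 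Hence $\diam \BB_X = \diam X$, so that $B_1$ and $B_2$ are adjacent in $G_{D, \BB_X}$ if and only if $\diam(B_1 \cup B_2) = \diam X$.

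Next I would establish the partition. Each block $X_i$ is a ball by statement~\ref{th2.6:s1} of Theorem~\ref{th2.6}, so statement~\ref{c1.3:s3} of Corollary~\ref{c1.3} yields $\BB_{X_i} = \{B \in \BB_X \colon B \subseteq X_i\}$. Disjointness is immediate: $X \not\subseteq X_i$ because $X_i \subsetneq X$ (here the hypothesis $k \geqslant 2$ is used), and $\BB_{X_i} \cap \BB_{X_j} = \varnothing$ for $i \neq j$, since a nonempty ball cannot be contained in $X_i \cap X_j = \varnothing$. The heart of the matter is exhaustion: given a ball $B$ with $B \neq X$, statement~\ref{p1.8:s3} of Proposition~\ref{p1.8} forces $\diam B < \diam X$, and then $B$ cannot meet two distinct blocks, because a point of $X_i$ and a point of $X_j$ are at distance $\diam X$ by Definition~\ref{d2.2}, which would give $\diam B = \diam X$. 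Thus $B \subseteq X_i$ for a unique $i$, and the claimed partition follows.

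Finally I would verify the adjacencies block by block, in each case via Lemma~\ref{l2.7}. If $B_1$, $B_2 \in \BB_{X_i}$, then $B_1 \cup B_2 \subseteq X_i$ gives $d_H(B_1, B_2) \leqslant \diam X_i < \diam X$, so there is no edge within a block. If $B_1 = X$ and $B_2 \neq X$, then $d_H(B_1, B_2) = \diam X$, an edge. If $B_1 \subseteq X_i$ and $B_2 \subseteq X_j$ with $i \neq j$, then choosing $a \in B_1$ and $b \in B_2$ gives $d(a, b) = \diam X$, whence $\diam(B_1 \cup B_2) = \diam X$ and again an edge. This is exactly the complete $(k+1)$-partite adjacency of $G_{D, \BB_X}[\{X\}, \BB_{X_1}, \ldots, \BB_{X_k}]$. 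I expect the only genuine obstacle to be the exhaustion step, namely that every proper ball lies inside a single block $X_i$; the remaining verifications are routine consequences of Lemma~\ref{l2.7} and Definition~\ref{d2.2}.
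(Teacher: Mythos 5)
Your proof is correct, but it runs on a different engine than the paper's. The paper evaluates every Hausdorff distance through the representing tree: by statement~\ref{th2.6:s3} of Theorem~\ref{th2.6} (formula~\eqref{th2.6:e4}) together with~\eqref{eq1.5}, the distance \(d_H\) between two distinct balls is the maximal label on the path joining them in \(T_X\), so all adjacency facts are read off from positions of vertices in the tree---two balls lying under distinct first-level vertices \(X_i\), \(X_j\) have the root, labeled \(\diam X\), on their joining path, while two balls under the same \(X_i\) have all path labels at most \(\diam X_i < \diam X\)---and the exhaustion \(\BB_X = \{X\} \cup \BB_{X_1} \cup \cdots \cup \BB_{X_k}\) is cited from statements~\ref{th2.6:s1}, \ref{th2.6:s2} of Theorem~\ref{th2.6} plus Corollary~\ref{c1.3}. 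You instead work one level lower, directly from Lemma~\ref{l2.7}, \(d_H(B_1, B_2) = \diam(B_1 \cup B_2)\), which is exactly the lemma the paper itself uses to prove statement~\ref{th2.6:s3}, and you replace the tree-based exhaustion by a purely metric argument: statement~\ref{p1.8:s3} of Proposition~\ref{p1.8} forces \(\diam B < \diam X\) for every ball \(B \neq X\), so \(B\) cannot meet two parts of \(G_{D,X}\), whence \(B \subseteq X_i\) for a unique \(i\). What the paper's route buys is brevity and consistency with its tree-centric machinery; what yours buys is a more self-contained argument in which, by Remark~\ref{r2.7}, the distance computations would survive in arbitrary ultrametric spaces, finiteness entering only through Theorem~\ref{th2.3} and through statement~\ref{th2.6:s1} of Theorem~\ref{th2.6}, the latter needed to know \(X_i \in \BB_X\) so that Corollary~\ref{c1.3} applies; your version also makes explicit the exhaustion step that the paper compresses into a citation. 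Two small touches worth adding: the claim that a point of \(X_i\) and a point of \(X_j\) are at distance \(\diam X\) uses Definition~\ref{d2.1} (every two vertices from distinct parts are adjacent) jointly with Definition~\ref{d2.2}, not Definition~\ref{d2.2} alone; and the inequality \(\diam X_i < \diam X\), which you invoke twice, deserves its one-line justification---there are no edges of \(G_{D,X}\) inside a part, so \(d(x, y) < \diam X\) for all \(x\), \(y \in X_i\), and since \(X\) is finite the maximum is attained.
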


\begin{proof}
Let \(Y = \BB_{X}\), \(Y_0 = \{X\}\) and \(Y_i = \BB_{X_i}\) for \(i = 1\), \(\ldots\), \(k\). According to formula~\eqref{eq1.5} and Theorem~\ref{th2.6} (formula~\eqref{th2.6:e4}) we have
\[
l(Y) = \diam Y = \diam \BB_{X} = l(X) = \diam X,
\]
and
\[
\diam Y_i = \diam \BB_{X_i} = \diam X_i < \diam X,
\]
and \(d_H(\{y_i\}, Y_0) = \diam X\) for every \(i \in \{1, \ldots, k\}\), and
\[
d(y_i, y_j) = d_H(\{y_i\}, \{y_j\}) = \diam X
\]
for all \(y_i \in Y_i\), \(y_j \in Y_j\) with distinct \(i\), \(j \in \{1, \ldots, k\}\).

Statements~\ref{th2.6:s1} and \ref{th2.6:s2} of Theorem~\ref{th2.6} and Corollary~\ref{c1.3} imply
\[
Y = \bigcup_{i = 0}^{k} Y_i.
\]
It remains to use Definition~\ref{d2.1} and Definition~\ref{d2.2}
\end{proof}

\section{Characterizations of balleans of finite ultrametric spaces up to isomorphisms}

First of all we recall the definition of the isomorphism of graphs.

\begin{definition}\label{d3.1}
Let $G_1$ and $G_2$ be finite graphs. A bijection $f\colon V(G_1)\to V(G_2)$ is an isomorphism of $G_1$ and $G_2$ if
\begin{equation}\label{d3.1e1}
(\{u,v\} \in E(G_1)) \Leftrightarrow (\{f(u),f(v)\} \in E(G_2))
\end{equation}
holds for all $u$, $v \in V(G_1)$. The graphs $G_1$ and $G_2$ are isomorphic if there exists an isomorphism $f\colon V(G_1) \to V(G_2)$.

If $G_1 = G_1(r_1)$ and $G_2 = G_2(r_2)$ are rooted graphs, then $G_1$ and $G_2$ are isomorphic (as rooted graphs) if there is an isomorphism $f\colon V(G_1) \to V(G_2)$ such that $f(r_1) = r_2$.
\end{definition}

Note that every isomorphism \(f \colon V(T_1) \to V(T_2)\) of the rooted trees \(T_1 = T_1(r_1)\) and \(T_2 = T_2(r_2)\) preserves the orientations corresponding to the choice of the roots \(r_1\) and \(r_2\) (see~\cite{HN} for the definition and properties of homomorphisms of directed graphs).

The next main definition is the definition of isomorphic labeled rooted trees.

\begin{definition}\label{d3.2}
Let \(T_i=T_i(r_i,l_i)\), \(i=1\), \(2\), be labeled rooted trees with the roots $r_i$ and the labelings \(l_i\colon V(T_i)\to \RR^{+}\). An isomorphism \(f\colon V(T_1) \to V(T_2)\) of the rooted trees $T_1(r_1)$ and $T_2(r_2)$ is an isomorphism of the labeled rooted trees $T_1(r_1,l_1)$ and $T_2(r_2,l_2)$ if the equality \begin{equation}\label{d3.2e1}
l_2(f(v))=l_1(v)
\end{equation}
holds for every $v \in V(T_1)$. The labeled rooted trees $T_1(r_1,l_1)$ and $T_2(r_2,l_2)$ are isomorphic if there is an isomorphism of these trees.
\end{definition}

Recall that two metric spaces $(X,d)$ and $(Y, \rho)$ are \emph{isometric} if there is a bijection $f\colon X\to Y$ such that the equality
$$
d(x,y)=\rho(f(x),f(y))
$$
holds for all $x$, $y \in X$.

\begin{theorem}[{\cite{DPT(Howrigid)}}]\label{l3.3}
Let $(X,d)$ and $(Y, \rho)$ be nonempty finite ultrametric spaces. The representing trees $T_X$ and $T_Y$ are isomorphic as labeled rooted trees if and only if $(X,d)$ and $(Y, \rho)$ are isometric.
\end{theorem}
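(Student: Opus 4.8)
The plan is to prove the two implications separately, using the identification \(V(T_X) = \BB_X\) from Theorem~\ref{th2.6} together with the distance formula of Lemma~\ref{l2.4}.

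First I would show that if \((X, d)\) and \((Y, \rho)\) are isometric then \(T_X\) and \(T_Y\) are isomorphic as labeled rooted trees. Let \(g \colon X \to Y\) be an isometry. Since \(g\) preserves distances it carries each closed ball \(B_r(x)\) of \((X, d)\) onto the closed ball \(B_r(g(x))\) of \((Y, \rho)\), and, being a bijection, it induces a bijection between \(\BB_X\) and \(\BB_Y\). By statement~\ref{th2.6:s1} of Theorem~\ref{th2.6} these balleans are exactly \(V(T_X)\) and \(V(T_Y)\), so the map \(f(B) := g(B)\) is a bijection \(V(T_X) \to V(T_Y)\) with \(f(X) = Y\). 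Because \(g\) is injective it preserves and reflects inclusion, so \(B_1 \subseteq B_2\) holds if and only if \(g(B_1) \subseteq g(B_2)\); by statement~\ref{th2.6:s2} of Theorem~\ref{th2.6} the edges of the representing trees are exactly the inclusion-minimal proper containments, hence \(f\) maps edges to edges in both directions and is an isomorphism of rooted trees. Finally \(\diam g(B) = \diam B\) for every \(B\), and the labels are the diameters, so \eqref{d3.2e1} holds and \(f\) is an isomorphism of labeled rooted trees.

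For the converse I would start from an isomorphism \(f \colon V(T_X) \to V(T_Y)\) of labeled rooted trees and build an isometry on the level of points. The key observation is that the leaves of a representing tree are precisely the singular balls: by construction a vertex is a leaf if and only if its label vanishes, and a ball has diameter \(0\) if and only if it is a one-point set. Since a graph isomorphism preserves vertex degrees it maps leaves to leaves, and since \(f\) preserves labels by~\eqref{d3.2e1} it maps label-\(0\) leaves to label-\(0\) leaves. Hence \(f\) restricts to a bijection between the singular balls of \((X, d)\) and those of \((Y, \rho)\), which defines a bijection \(g \colon X \to Y\) by the rule \(f(\{x\}) = \{g(x)\}\). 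It then remains to check that \(g\) is distance preserving. Let \(x_1 \neq x_2\) in \(X\) and let \(P\) be the unique path in \(T_X\) joining the leaves \(\{x_1\}\) and \(\{x_2\}\). Since \(f\) is a graph isomorphism, \(f(P)\) is the unique path in \(T_Y\) joining \(\{g(x_1)\}\) and \(\{g(x_2)\}\), and \(f\) preserves the labels of all its internal vertices. Applying Lemma~\ref{l2.4} to both \(P\) and \(f(P)\) and comparing the two maxima yields \(d(x_1, x_2) = \rho(g(x_1), g(x_2))\); the case \(x_1 = x_2\) is trivial, as is the degenerate case \(|X| = 1\), where both trees reduce to a single labeled vertex. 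Thus \(g\) is an isometry.

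I expect the main obstacle to be the converse direction, specifically the verification that the leaves of \(T_X\) coincide with the singular balls and that \(f\) therefore induces a genuine map of the underlying point sets. Once this leaf–singleton correspondence is secured, Lemma~\ref{l2.4} reduces the isometry property to the elementary fact that a tree isomorphism carries the path between two leaves to the path between their images while preserving all labels.
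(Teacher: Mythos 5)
Your proof is correct, but there is nothing in this paper to compare it against line by line: Theorem~\ref{l3.3} is quoted from \cite{DPT(Howrigid)} and no proof is reproduced here. What you have written is precisely the proof one would assemble from the machinery this paper does develop, and both halves check out. In the forward direction, a surjective isometry \(g\) satisfies \(g(B_r(x)) = B_r(g(x))\), hence induces a bijection \(\BB_X \to \BB_Y\) with \(g(X) = Y\) that preserves and reflects inclusion; by statements \ref{th2.6:s1} and \ref{th2.6:s2} of Theorem~\ref{th2.6} the vertex sets and the edge relation (direct successorship, i.e.\ immediate proper inclusion) of \(T_X\) and \(T_Y\) are described purely in terms of balls and inclusions, and since every vertex \(v\) carries the label \(l(v) = \diam v\), with label \(0\) exactly at the singletons, the induced map is an isomorphism of labeled rooted trees. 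In the converse direction, your leaf--singleton correspondence is the right pivot: leaves are exactly the label-\(0\) vertices, which are exactly the one-point balls, so \(f\) induces a bijection \(g \colon X \to Y\), and Lemma~\ref{l2.4} applied to a leaf-to-leaf path and its \(f\)-image (which have identical label sequences) gives \(d(x_1, x_2) = \rho(g(x_1), g(x_2))\); the cases \(x_1 = x_2\) and \(|X| = 1\) are handled as you say. Two remarks. First, no circularity arises from invoking Theorem~\ref{th2.6}: its statements \ref{th2.6:s1} and \ref{th2.6:s2} are imported from \cite{DP2019} independently of Theorem~\ref{l3.3}, so within this paper's logical order your appeal is legitimate. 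Second, an alternative forward argument, likely closer to the cited source, avoids the ballean entirely: since an isometry preserves diameters, it maps the parts \(X_1, \ldots, X_k\) of the diametrical graph \(G_{D,X}\) onto the parts of \(G_{D,Y}\), and induction down the levels of the construction following Theorem~\ref{th2.3} builds the tree isomorphism directly. Your route buys a uniform, order-theoretic description of the edges; the inductive route is self-contained at the level of diametrical graphs and does not presuppose that \(V(T_X) = \BB_X\).
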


Let \(T = T(r)\) be a rooted tree. Then, for every \(v \in V(T)\), we denote by \(\delta^{+}(v)\) the number of direct successors of \(v\).

The following theorem and their corollary can be found in~\cite{DP2019} but in view of the importance of this result in the context of the paper, we present it with a short proof.

\begin{theorem}[\cite{DP2019}]\label{l3.4}
Let $T=T(r, l_T)$ be a finite labeled rooted tree with the root $r$ and the labeling $l_T \colon V(T) \to \mathbb{R}^+$. Then the following two conditions are equivalent.
\begin{enumerate}
\item\label{l3.4:s1} For every $u \in V(T)$ we have $\delta^+(u)\neq 1$ and
$$
(\delta^+(u) =0) \Leftrightarrow (l_T(u)=0)
$$
and, in addition, the inequality
\begin{equation}\label{l3.4e1}
l_T(v) < l_T(u)
\end{equation}
holds whenever $v$ is a direct successor of $u$.
\item\label{l3.4:s2} There is a unique up to isometry finite ultrametric space $(X,d)$ such that the representing tree $T_X$ and $T$ are isomorphic as labeled rooted trees.
\end{enumerate}
\end{theorem}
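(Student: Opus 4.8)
The plan is to treat the two implications separately, reading off \(\ref{l3.4:s2} \Rightarrow \ref{l3.4:s1}\) directly from the construction of representing trees and investing the real work in \(\ref{l3.4:s1} \Rightarrow \ref{l3.4:s2}\). For the easy direction, suppose \(T \cong T_X\) as labeled rooted trees for some finite ultrametric space \((X,d)\). Each structural property in \ref{l3.4:s1} can then be read off from Section~2: by Theorem~\ref{th2.3} the diametrical graph of every non-singleton ball is complete \(k\)-partite with \(k \geqslant 2\), so every internal vertex has \(\delta^{+} \geqslant 2\) and hence \(\delta^{+}(u) \neq 1\); a vertex is a leaf precisely when the corresponding ball is singular, i.e. has diameter \(0\), which by~\eqref{eq2.1} is the same as label \(0\); and the label of a direct successor is the diameter of a part of the diametrical graph, strictly smaller than \(\diam X\), giving~\eqref{l3.4e1}. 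Since an isomorphism of labeled rooted trees transports these properties, \(T\) satisfies \ref{l3.4:s1}.

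For \(\ref{l3.4:s1} \Rightarrow \ref{l3.4:s2}\) I would build the space directly on the leaves of \(T\). Put \(X = \overline{L}(T)\) and, for distinct leaves \(x\), \(y\), set \(d(x, y) = \max_{u \in V(P)} l_T(u)\), where \(P\) is the unique path joining \(x\) and \(y\) in \(T\), and \(d(x,x) = 0\); this mirrors the formula of Lemma~\ref{l2.4}. Symmetry is clear, and the strong triangle inequality is exactly the tree inclusion \(V(P(x,z)) \subseteq V(P(x,y)) \cup V(P(y,z))\) already exploited in the proof of Corollary~\ref{c2.8}, so it holds for \emph{any} labeling. Hypothesis \ref{l3.4:s1} enters in positivity: the path between two distinct leaves passes through their nearest common predecessor, which is internal, hence has \(\delta^{+} \geqslant 2 > 0\) and therefore, by \((\delta^{+}(u)=0)\Leftrightarrow(l_T(u)=0)\), a strictly positive label; thus \(d(x,y) > 0\). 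So \((X,d)\) is a finite ultrametric space. Because labels strictly decrease along descent by~\eqref{l3.4e1}, the maximum defining \(d(x,y)\) is attained at the nearest common predecessor \(w\) of \(x\) and \(y\), so \(d(x,y) = l_T(w)\).

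It then remains to show that the representing tree \(T_X\) of this space is isomorphic to \(T\), and here lies the main obstacle. I would define \(\Phi \colon V(T) \to \BB_X\) by \(\Phi(v) = L(T_v)\) and prove it is a label-preserving isomorphism of rooted trees onto \(V(T_X) = \BB_X\), the last equality being statement~\ref{th2.6:s1} of Theorem~\ref{th2.6}. For each \(v\), any two leaves under \(v\) have their nearest common predecessor at \(v\) or below, while any leaf outside \(T_v\) pairs with one inside \(T_v\) at a predecessor strictly above \(v\); using strict monotonicity of \(l_T\) this yields \(\diam L(T_v) = l_T(v)\) together with the ball criterion of Corollary~\ref{c1.3}, so \(L(T_v)\) is a ball with diameter \(l_T(v)\). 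Hence \(\Phi\) maps into \(\BB_X\) and preserves labels, as the label of a ball in \(T_X\) is its diameter. Injectivity uses \(\delta^{+}(u) \neq 1\): if \(u \neq v\), then either \(T_u\), \(T_v\) are disjoint, or one properly contains the other and the larger vertex, having a second child, carries a leaf absent from the smaller. For surjectivity I would fix a ball \(B = B_r(a)\) with \(r = \diam B\) (Proposition~\ref{pr1.2}), walk up the ancestor chain of the leaf \(a\), and identify \(B\) with \(L(T_v)\) for the highest ancestor \(v\) of \(a\) with \(l_T(v) \leqslant r\), strict monotonicity giving \(B_r(a) = B_{l_T(v)}(a) = L(T_v)\). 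Finally, \(\Phi\) sends the root \(X = L(T_r)\) to the root of \(T_X\) and, by statement~\ref{th2.6:s2} of Theorem~\ref{th2.6}, matches direct successors with direct successors. Uniqueness up to isometry is then immediate from Theorem~\ref{l3.3}: two such spaces have representing trees isomorphic to \(T\), hence to each other, forcing the spaces to be isometric.

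The hardest step is the verification that \(\Phi\) is a bijection of \(V(T)\) onto \(\BB_X\) — in particular surjectivity, where an arbitrary ball must be recognized as a leaf set \(L(T_v)\) — together with the identity \(\diam L(T_v) = l_T(v)\). Both rest squarely on the strict decrease of labels~\eqref{l3.4e1} and the condition \(\delta^{+}(u) \neq 1\), which is precisely why these hypotheses appear in \ref{l3.4:s1}.
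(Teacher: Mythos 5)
Your proof is correct and follows essentially the same route as the paper: the paper likewise builds \((X,d)\) on the set of leaves of \(T\) with \(d(x,y)\) equal to the label of the nearest common predecessor (its \(\min\)-over-common-ancestors formula~\eqref{l3.4e2} coincides with your \(\max\)-over-path formula under condition~\ref{l3.4:s1}, exactly as in Lemma~\ref{l2.4}), obtains the isomorphism \(T \cong T_X\) via the leaf-set map --- which the paper records explicitly as the mapping \(\Phi\) of Corollary~\ref{c3.4} --- and derives uniqueness from Theorem~\ref{l3.3}, handling the converse implication by transporting the structural properties of representing trees through the isomorphism just as you do. The only difference is one of detail, not of method: you spell out the verifications (ultrametricity and positivity of \(d\), the identity \(\diam L(T_v) = l_T(v)\), and the bijectivity of \(\Phi\) onto \(\BB_X\)) that the paper compresses into the remark that condition~\ref{l3.4:s1} yields an ultrametric and that Lemma~\ref{l2.4} yields the isomorphism.
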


\begin{proof}
$\ref{l3.4:s1} \Rightarrow \ref{l3.4:s2}$. Let us denote by $X$ the set of leaves of $T$. For every pair of distinct $x$, $y \in X$ we denote by $S_{x,y}$ the subset of $V(T)$ consisting of all vertices $w$ for which $x$ and $y$ are successors of $w$ and define a function $d \colon X \times X \to \mathbb{R}^+$ as
\begin{equation}\label{l3.4e2}
d(x,y) := \begin{cases}
0, & \text{if $x = y$}\\
\min_{w \in S_{x,y}} l_T(w), & \text{if $x \neq y$}.
\end{cases}
\end{equation}
Using condition \ref{l3.4:s1}, we can prove that $d$ is an ultrametric on $X$. Now~\eqref{l3.4e2}, the definition of the representing trees and Lemma~\ref{l2.4} imply that $T_X$ and $T(r, l_T)$ are isomorphic as labeled rooted trees. The uniqueness of \((X, d)\) for which \(T_X\) and \(T\) are isomorphic follows from Theorem~\ref{l3.3}.

$\ref{l3.4:s2} \Rightarrow \ref{l3.4:s1}$. If $(X,d)$ is a finite nonempty ultrametric space, then condition~$(i)$ evidently holds for $T=T_X$. Moreover, if we have two isomorphic labeled rooted trees and one of them satisfies condition~\ref{l3.4:s1}, then another also satisfies~\ref{l3.4:s1}. The implication $\ref{l3.4:s2} \Rightarrow \ref{l3.4:s1}$ follows.
\end{proof}

Using the proofs of Theorem~\ref{l3.4} and Theorem~\ref{th2.6}, we obtain the following.

\begin{corollary}\label{c3.4}
Let \(T = T(r, l_T)\) be a finite labeled rooted tree satisfying condition~\ref{l3.4:s1} of Theorem~\ref{l3.4}, let \(X\) be the set of leaves of \(T\) and let \((X, d)\) be the ultrametric space with \(d \colon X \times X \to \RR^{+}\) defined by~\eqref{l3.4e2}. For every internal vertex \(v\) of \(T\) we denote by \(\overline{L}(T_v)\) the set of all leaves of \(T\) which are successors of \(v\). Then the mapping \(\Phi \colon V(T) \to V(T_X)\), where \(T_X = T_X(X, l)\) is the representing tree of \((X, d)\) and
\begin{equation}\label{c3.4:e1}
\Phi(v) = \begin{cases}
\overline{L}(T_v), & \text{if \(v\) is an internal vertex of \(T_X\)}\\
\{v\}, & \text{if \(v\) is a leaf of \(T\)},
\end{cases}
\end{equation}
is an isomorphism of the labeled rooted trees \(T\) and \(T_X\).
\end{corollary}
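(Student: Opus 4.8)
The plan is to verify that the explicitly defined map $\Phi$ meets every requirement of an isomorphism of labeled rooted trees in the sense of Definition~\ref{d3.2}: it takes values in $V(T_X)$, it is a bijection, it preserves edges, it sends the root to the root, and it respects the two labelings. Throughout I would use the identification $V(T_X) = \BB_X$ furnished by statement~\ref{th2.6:s1} of Theorem~\ref{th2.6}, together with the fact that the labeling of a representing tree is given by diameters, $l(B) = \diam B$.

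The first and central step is a distance formula internal to $T$. For distinct leaves $x$, $y$, the set $S_{x,y}$ of common ancestors is precisely the set of vertices lying on the path from the root down to the lowest common ancestor $x \wedge y$; since condition~\ref{l3.4:s1} forces $l_T$ to decrease strictly along every edge directed away from the root, the minimum in~\eqref{l3.4e2} is attained at $x \wedge y$, so that $d(x, y) = l_T(x \wedge y)$. From this I would show that for each internal vertex $v$ the set $\Phi(v) = \overline{L}(T_v)$ is a ball with $\diam \Phi(v) = l_T(v)$. Indeed, choosing two leaves in distinct subtrees hanging from $v$ (possible since $\delta^{+}(v) \geqslant 2$) yields a pair whose lowest common ancestor is $v$, giving $\diam \overline{L}(T_v) = l_T(v)$; and the ball characterization of Corollary~\ref{c1.3} applies because any leaf $x \notin \overline{L}(T_v)$ satisfies $d(x, a) = l_T(x \wedge a) > l_T(v)$ for every $a \in \overline{L}(T_v)$, the common ancestor $x \wedge a$ lying strictly above $v$. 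As singular balls $\{v\}$ correspond to the leaves and carry label $0$, this single step shows at once that $\Phi$ is well defined and that it preserves labels.

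Next I would prove that $\Phi$ is an order isomorphism from the ancestor relation on $T$ to the inclusion relation on $\BB_X$: the inclusion $\Phi(v) \subseteq \Phi(u)$ holds if and only if $v$ belongs to the subtree $T_u$. One direction is immediate from~\eqref{eq2.3}; for the converse, if $v$ were incomparable to $u$ the two leaf sets would be disjoint, and if $v$ were a proper ancestor of $u$ the inclusion would run the other way (here $\delta^{+}(v) \geqslant 2$ is used). This order isomorphism gives injectivity immediately, while surjectivity follows either from the equality $|V(T)| = |V(T_X)|$ guaranteed by Theorem~\ref{l3.4}, or directly by sending a ball $B$ to the lowest common ancestor $v$ of its points and checking $\overline{L}(T_v) = B$ via equality of diameters and statement~\ref{p1.8:s3} of Proposition~\ref{p1.8}. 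Edge preservation is then formal: in both trees the edges are exactly the covering pairs of the relevant order, and statement~\ref{th2.6:s2} of Theorem~\ref{th2.6} identifies the covering relation in $T_X$ with strict inclusion admitting no ball in between.

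Finally, root preservation is the observation that $\Phi(r) = \overline{L}(T_r) = X$, which is the root of $T_X$. The step I expect to be the main obstacle is the first one---establishing that $\overline{L}(T_v)$ is genuinely a ball and that its diameter equals $l_T(v)$---since once the distance formula $d(x, y) = l_T(x \wedge y)$ and the ball characterization of Corollary~\ref{c1.3} are secured, everything that remains is order-theoretic bookkeeping transported through Theorem~\ref{th2.6}.
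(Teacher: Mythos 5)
Your proposal is correct and follows essentially the route the paper itself indicates, since the paper offers no detailed proof but only the pointer ``using the proofs of Theorem~\ref{l3.4} and Theorem~\ref{th2.6}'' (with Remark~\ref{r3.6} confirming that the image of \(\Phi\) is the ballean via statement~\ref{th2.6:s1}). Your lowest-common-ancestor formula \(d(x,y)=l_T(x\wedge y)\) extracted from~\eqref{l3.4e2}, the verification via Corollary~\ref{c1.3} that each \(\overline{L}(T_v)\) is a ball of diameter \(l_T(v)\), and the transport of the covering relation through statement~\ref{th2.6:s2} supply exactly the details the paper leaves implicit, so this is the intended argument carried out in full rather than a genuinely different one.
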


\begin{remark}\label{r3.6}
Using statement~\ref{th2.6:s1} of Theorem~\ref{th2.6}, we see that the image of the mapping \(\Phi\) defined by~\eqref{c3.4:e1} is the ballean of the ultrametric space \((X, d)\).
\end{remark}

Theorem~\ref{l3.4} also implies the following important corollary.

\begin{corollary}[\cite{DP2019}]\label{c3.5}
Let $T=T(r)$ be a finite rooted tree. Then the following conditions are equivalent.
\begin{enumerate}
\item[$(i)$] For every $u \in V(T)$ we have $\delta^+(u)\neq 1$.
\item[$(ii)$] There is a finite ultrametric space $(X,d)$ such that $T_X$ and $T$ are isomorphic as rooted trees.
\end{enumerate}
\end{corollary}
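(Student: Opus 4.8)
The plan is to deduce this purely combinatorial statement from the labeled version, Theorem~\ref{l3.4}, by observing that the condition \(\delta^+(u)\neq 1\) is precisely the portion of condition~\ref{l3.4:s1} that survives forgetting the labeling. The two implications then split naturally according to whether we must \emph{produce} a labeling or \emph{discard} one.

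For \((i)\Rightarrow(ii)\), I would equip the given tree \(T=T(r)\) with a labeling \(l_T\colon V(T)\to\RR^{+}\) chosen so that the full condition~\ref{l3.4:s1} of Theorem~\ref{l3.4} holds, and then invoke that theorem. A convenient choice assigns to each vertex \(u\) its height \(h(u)\), the number of edges in a longest downward path from \(u\) to a leaf; explicitly \(h(u)=0\) when \(u\) is a leaf and \(h(u)=1+\max\{h(v)\colon v \text{ is a direct successor of } u\}\) otherwise. Then leaves receive the label \(0\) and internal vertices receive positive labels, which matches \((\delta^+(u)=0)\Leftrightarrow(l_T(u)=0)\); and for any direct successor \(v\) of \(u\) we have \(l_T(v)=h(v)<1+h(v)\leqslant h(u)=l_T(u)\), so the labels strictly decrease along edges toward the leaves. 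Together with the hypothesis \(\delta^+(u)\neq 1\) supplied by \((i)\), these verify condition~\ref{l3.4:s1}. Theorem~\ref{l3.4} then produces an ultrametric space \((X,d)\) whose representing tree \(T_X\) is isomorphic to \(T(r,l_T)\) as labeled rooted trees; forgetting the labels gives an isomorphism of rooted trees, which is exactly \((ii)\).

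For \((ii)\Rightarrow(i)\), suppose \(T_X\) and \(T\) are isomorphic as rooted trees for some finite ultrametric space \((X,d)\). Applying the implication \(\ref{l3.4:s2}\Rightarrow\ref{l3.4:s1}\) of Theorem~\ref{l3.4} to the representing tree \(T_X\) with its canonical labeling, we learn that \(\delta^+(w)\neq 1\) for every \(w\in V(T_X)\). Since an isomorphism of rooted trees preserves the root and all edges, it preserves the induced orientation and hence carries the direct-successor relation bijectively; in particular it preserves the number of direct successors at each vertex. Therefore \(\delta^+(u)\neq 1\) for every \(u\in V(T)\) as well, which is \((i)\).

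The argument involves no genuine obstacle: the only point requiring care is the construction of the labeling in \((i)\Rightarrow(ii)\), where one must simultaneously meet all three demands of condition~\ref{l3.4:s1}. The height function handles this cleanly because it is strictly monotone along edges and vanishes exactly on the leaves; any strictly order-reversing assignment with the same leaf behavior would serve equally well, so the choice is inessential. The reverse implication is then immediate once one notes that \(\delta^+\) is an isomorphism invariant of rooted trees.
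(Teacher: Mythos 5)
Your proof is correct and follows exactly the route the paper intends: the paper states Corollary~\ref{c3.5} as an immediate consequence of Theorem~\ref{l3.4} without spelling out the details, and your argument fills them in the standard way---equipping \(T\) with the height labeling \(h\) (strictly decreasing toward the leaves and vanishing exactly at vertices with \(\delta^{+}(u)=0\)) to get \((i)\Rightarrow(ii)\), and using the fact that \(\delta^{+}\) is a rooted-tree isomorphism invariant together with the observation, made explicitly in the paper's proof of Theorem~\ref{l3.4}, that \(T_X\) always satisfies condition~\ref{l3.4:s1}, to get \((ii)\Rightarrow(i)\). No gaps; nothing further is needed.
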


\begin{definition}\label{d3.6}
Let \(T_1\) and \(T_2\) be trees and let \(x\) be a leaf of \(T_2\). Suppose we have
\[
V(T_2) = V(T_1) \cup \{x\}, \quad x \notin V(T_1), \quad E(T_1) \subseteq E(T_2).
\]
Then there is a unique vertex \(u \in V(T_1)\), such that \(\{x, u\} \in E(T_2)\). In this case we say that \(T_2\) is obtained by \emph{adding the leaf} \(x\) \emph{to the vertex} \(u\).
\end{definition}

In the following theorem we consider an ultrametric space \((X, d)\) with \(X\) satisfying the condition
\begin{equation}\label{eq3.5}
\{Y\} \not\subseteq X
\end{equation}
for every \(Y \subseteq X\). In accordance with Definition~\ref{d3.6}, this condition allows us to add leaf \(\{u\}\) to the internal vertex \(u\) of the tree \(T_X\) (see equality~\eqref{eq2.4}). We note that for every ultrametric space \((Z, \rho)\) there is an ultrametric space \((X, d)\) such that \((Z, \rho)\) and \((X, d)\) are isometric and, in addition, \eqref{eq3.5} holds for every \(Y \subseteq X\).

\begin{theorem}\label{th3.6}
Let \((X, d)\) be a finite ultrametric space with the representing tree \(T_X\), let \(\BB_{X}\) be the ballean of \((X, d)\) and let \(d_H\) be the Hausdorff distance on \(\BB_{X}\). Then the representing tree \(T_{\BB_{X}}\) of the ultrametric space \((\BB_{X}, d_H)\) and the rooted tree obtained from \(T_X\) by adding a leaf to every internal vertex of \(T_X\) are isomorphic as rooted trees.
\end{theorem}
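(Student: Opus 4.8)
The plan is to argue by induction on the number of points of $X$, using Lemma~\ref{l2.9} to read off the top level of the representing tree $T_{\BB_X}$ and then reducing to the balleans of the parts of the diametrical graph of $X$. For a representing tree $T$ let me write $\widehat{T}$ for the rooted tree obtained from $T$ by adding a leaf to every internal vertex. Two elementary observations will be used repeatedly: if $T$ is a single node then $T$ has no internal vertex, so $\widehat{T} = T$; and for any vertex $v$ of $T$ the subtree of $\widehat{T}$ rooted at $v$ is $\widehat{T_v}$, because the leaves that $\widehat{T}$ adjoins within $T_v$ are precisely those adjoined at the internal vertices of $T_v$. The goal is to produce a rooted-tree isomorphism $\widehat{T_X} \cong T_{\BB_X}$.

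If $|X| = 1$ then $\BB_X = \{X\}$ is a one-point space, both $T_X$ and $T_{\BB_X}$ are single nodes, and $\widehat{T_X} = T_X = T_{\BB_X}$; this is the base case. Now let $|X| \geqslant 2$ and, as in Theorem~\ref{th2.3}, write $G_{D, X} = G_{D, X}[X_1, \ldots, X_k]$ with $k \geqslant 2$. By the construction of representing trees the direct successors of the root $X$ in $T_X$ are $X_1, \ldots, X_k$, and the subtree of $T_X$ rooted at $X_i$ is the representing tree $T_{X_i}$ of the subspace $(X_i, d|_{X_i \times X_i})$. Hence, in $\widehat{T_X}$, the direct successors of the root $X$ are one added leaf together with $X_1, \ldots, X_k$, and the subtree rooted at $X_i$ is $\widehat{T_{X_i}}$.

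Next I would read off the top level of $T_{\BB_X}$ from Lemma~\ref{l2.9}: since $G_{D, \BB_X} = G_{D, \BB_X}[\{X\}, \BB_{X_1}, \ldots, \BB_{X_k}]$, the direct successors of the root $\BB_X$ in $T_{\BB_X}$ are exactly $\{X\}, \BB_{X_1}, \ldots, \BB_{X_k}$. Here $\{X\}$ is a one-element subset of $\BB_X$, that is, a singular ball of $(\BB_X, d_H)$, and therefore a leaf. The heart of the argument is to identify the subtree of $T_{\BB_X}$ rooted at the first-level vertex $\BB_{X_i}$ with the representing tree $T_{\BB_{X_i}}$ of the space $(\BB_{X_i}, d_H)$. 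By Corollary~\ref{c1.3} we have $\BB_{X_i} = \{B \in \BB_X \colon B \subseteq X_i\}$, so $\BB_{X_i}$ really is a ball of $(\BB_X, d_H)$; moreover the Hausdorff distance between two balls contained in $X_i$ depends only on points of $X_i$, so $d_H$ restricted to $\BB_{X_i}$ coincides with the Hausdorff metric of the subspace $(X_i, d|_{X_i \times X_i})$. Consequently, by the recursive nature of the construction of representing trees, this subtree is $T_{\BB_{X_i}}$. This identification, which is what makes the induction hypothesis applicable, is the step requiring the most care.

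Finally I would glue everything together. Since $|X_i| < |X|$, the induction hypothesis supplies rooted-tree isomorphisms $\widehat{T_{X_i}} \cong T_{\BB_{X_i}}$ for each $i$. Mapping the root $X$ to the root $\BB_X$, the leaf added at $X$ to the leaf $\{X\}$, and extending by the chosen isomorphism of $\widehat{T_{X_i}}$ onto $T_{\BB_{X_i}}$ on each branch, one obtains a bijection $V(\widehat{T_X}) \to V(T_{\BB_X})$ preserving adjacency and the root, that is, a rooted-tree isomorphism $\widehat{T_X} \cong T_{\BB_X}$, as required. As a consistency check one may note that, since every internal vertex of $T_X$ has at least two direct successors and acquires one more in $\widehat{T_X}$, the tree $\widehat{T_X}$ satisfies $\delta^{+}(u) \neq 1$ for all $u$, so by Corollary~\ref{c3.5} it is a representing tree of some finite ultrametric space; the induction above pins down which one.
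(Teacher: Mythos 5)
Your proof is correct and takes essentially the same route as the paper: both rest on Lemma~\ref{l2.9} applied recursively down the representing tree, the paper unrolling this recursion as a level-by-level construction of \(T_{\BB_X}\) together with the explicit isomorphism \(u \mapsto \BB_u\) on vertices of \(T_X\), while you package the identical recursion as an induction on \(|X|\) whose glued isomorphism coincides with that explicit map. The subtree identification you flag as the delicate step (the subtree of \(T_{\BB_X}\) rooted at \(\BB_{X_i}\) being \(T_{\BB_{X_i}}\), since \(d_H\) restricted to \(\BB_{X_i}\) is the Hausdorff metric of the subspace \((X_i, d|_{X_i \times X_i})\)) is exactly what the paper's phrase ``by applying the above described procedure to \(\BB_{X_i}\)'' uses implicitly, so no substantive difference remains.
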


This theorem can be obtained as a simple consequence of Lemma~\ref{l2.9} and description of representing trees given after Theorem~\ref{th2.3} but in view of the importance of Theorem~\ref{th3.6} for what follows we give a detailed proof below.

\begin{proof}[Proof of Theorem~\ref{th3.6}.]
Suppose first that \(X\) is a single-point set, \(X = \{x\}\). Then we have \(V(T_X) = \{x\}\) and \(V(T_{\BB_{X}}) = \{X\} = \{\{x\}\}\). The theorem holds because \(T_X\) does not contain any internal vertices. In this case, \(T_{\BB_{X}}\) is the rooted tree consisting of one vertex \(\{X\}\) with the label \(0\). Let \(|X| \geqslant 2\). According to Lemma~\ref{l2.9} and Theorem~\ref{th2.3} there is an integer number \(k \geqslant 2\) such that
\[
G_{D, X} = G_{D, X}[X_1, \ldots, X_k] \text{ and } G_{D, \BB_{X}} = G_{D, \BB_{X}}[\{X\}, \BB_{X_1}, \ldots, \BB_{X_k}].
\]
Then \(X_1\), \(\ldots\), \(X_k\) are the vertices of the first level of \(T_X\) and \(\{X\}\), \(\BB_{X_1}\), \(\ldots\), \(\BB_{X_k}\) are the vertices of the first level of \(T_{\BB_{X}}\). Note that \(\{X\}\) is an one-point set consisting from the ball \(\{X\} \in \BB_{X}\). Consequently, we have
\[
\diam \{X\} = 0,
\]
so that \(\{X\}\) is a leaf of \(T_{\BB_{X}}\). Write \(T_1\) for the rooted tree obtained from \(T_X\) by adding the leaf \(\{X\}\) to the root \(X\) of \(T_X\). Since all vertices of \(T_X\) are subsets of \(X\), condition~\eqref{eq3.5} and Definition~\ref{d3.6} imply that \(T_1\) is correctly defined.

The vertices of \(T_{\BB_{X}}\) of first level labeled by \(0\) are leaves of \(T_{\BB_{X}}\) and those labeled by \(\diam \BB_{X_i} > 0\) are internal vertices.

If the first level has no internal vertices, then the tree \(T_{\BB_{X}}\) is constructed. Otherwise, by applying the above described procedure to \(\BB_{X_i}\) with \(\diam \BB_{X_{i}} > 0\), we construct the vertices of the second level of \(T_{\BB_{X}}\) and a rooted tree \(T_2\) obtained from \(T_1\) by adding the leaf \(\{X_i\}\) to every internal vertex \(X_i\) of the first level in \(T_1\). Using~\eqref{eq3.5}, we see that \(T_2\) is also correctly defined. Since \(X\) is finite, all vertices on some level \(n\) will be leaves. Thus we will construct the rooted tree \(T_{\BB_{X}}\) and a rooted tree \(T_n\) such that:
\begin{itemize}
\item \(V(T_n) = V(T_X) \cup \{\{B\} \colon B \in \BB_{X} \text{ and } \diam B > 0\}\);
\item \(T_n\) is obtained from \(T_X\) by adding the leaf \(\{B\}\) to every nonsingular ball (\(= \text{internal vertex}\)) \(B\) of \(T_X\).
\end{itemize}

To complete the proof it suffices to note that the function
\[
f \colon V(T_n) \to V(T_{\BB_{X}})
\]
with
\[
f(u) = \begin{cases}
\BB_{u} & \text{if \(u\) is a vertex of \(T_X\) (= is a ball in \((X, d)\))}\\
u & \text{if } u \notin V(T_X)
\end{cases}
\]
is an isomorphism of \(T_n\) and \(T_{\BB_{X}}\).
\end{proof}

Theorem~\ref{th3.6} and Corollary~\ref{c3.5} give us the following.

\begin{proposition}\label{p3.7}
Let \(T = T(r)\) be a finite rooted tree and let \(Ch(v)\) denote the set of all direct successors of \(v\) for every \(v \in V(T)\). Then the following conditions are equivalent.
\begin{enumerate}
\item\label{p3.7:s1} For every \(u \in V(T)\) we have
\[
\delta^{+}(u) \notin \{1, 2\}
\]
and \(Ch(v)\) contains a leaf for every internal vertex \(v \in V(T)\).
\item\label{p3.7:s2} There is a finite nonempty ultrametric space \((X, d)\) such that \(T_{\BB_{X}}\) and \(T\) are isomorphic as rooted trees.
\end{enumerate}
\end{proposition}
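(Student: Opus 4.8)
The plan is to read both directions of the equivalence through the two results flagged just before the statement: Theorem~\ref{th3.6}, which realizes \(T_{\BB_X}\) as the tree obtained from \(T_X\) by adding a leaf to every internal vertex, and Corollary~\ref{c3.5}, which says that the rooted trees isomorphic to some representing tree \(T_X\) are exactly those in which no vertex has exactly one direct successor. The whole argument reduces to understanding how the operation ``add a leaf to every internal vertex'' acts on the direct-successor counts \(\delta^{+}\), and to checking that this operation is invertible precisely under condition~\ref{p3.7:s1}.

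For the implication \(\ref{p3.7:s2} \Rightarrow \ref{p3.7:s1}\), I would start from an isomorphism \(T \cong T_{\BB_X}\) and apply Theorem~\ref{th3.6} to write \(T_{\BB_X}\) as \(T_X\) with one leaf attached to each internal vertex. Since the attached leaves and the original leaves of \(T_X\) are all leaves of \(T_{\BB_X}\), the internal vertices of \(T_{\BB_X}\) coincide with those of \(T_X\). By Corollary~\ref{c3.5} every vertex of \(T_X\) has \(\delta^{+} \neq 1\); attaching a leaf raises \(\delta^{+}\) of each internal vertex from a value \(\geqslant 2\) to a value \(\geqslant 3\), while leaves remain leaves. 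Hence every vertex of \(T\) has \(\delta^{+} \notin \{1, 2\}\), and the newly attached leaf witnesses that \(Ch(v)\) contains a leaf for every internal vertex \(v\). This is exactly condition~\ref{p3.7:s1}.

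For the converse \(\ref{p3.7:s1} \Rightarrow \ref{p3.7:s2}\), I would reverse the construction. Condition~\ref{p3.7:s1} guarantees that each internal vertex \(v\) of \(T\) satisfies \(\delta^{+}(v) \geqslant 3\) and has at least one leaf among its direct successors; I would delete one such leaf from every internal vertex, obtaining a rooted tree \(S\). Deleting a leaf only lowers \(\delta^{+}\) of its parent by one, so every former internal vertex of \(T\) now has \(\delta^{+} \geqslant 2\) in \(S\) (in particular it stays internal), and all other vertices are unaffected; thus no vertex of \(S\) has \(\delta^{+} = 1\). By Corollary~\ref{c3.5} there is a finite nonempty ultrametric space \((X, d)\) with \(S \cong T_X\). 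Applying Theorem~\ref{th3.6} to this space, \(T_{\BB_X}\) is obtained from \(T_X \cong S\) by adding a leaf to each internal vertex, which restores \(T\) up to isomorphism, so \(T_{\BB_X} \cong T\).

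The step that needs the most care is the reversibility itself: one must confirm that the internal vertices of \(S\) are exactly the internal vertices of \(T\) (guaranteed by \(\delta^{+} \geqslant 3\) before deletion), so that re-adding leaves via Theorem~\ref{th3.6} touches the correct set of vertices, and that the resulting tree is isomorphic to \(T\) regardless of which leaf child was deleted from each internal vertex. The latter is clear because deleting one leaf and then adding one leaf back at the same vertex leaves the multiset of direct-successor subtrees unchanged up to rooted isomorphism, leaves being interchangeable in an unlabeled rooted tree. One should also record the degenerate case \(|X| = 1\), where \(T\) is a single vertex and both conditions hold vacuously.
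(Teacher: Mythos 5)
Your proposal is correct and matches the paper's intended argument exactly: the paper derives Proposition~\ref{p3.7} directly from Theorem~\ref{th3.6} and Corollary~\ref{c3.5} without spelling out details, and your two directions (adding a leaf to each internal vertex raises \(\delta^{+}\) from \(\geqslant 2\) to \(\geqslant 3\); conversely, deleting one leaf child per internal vertex yields a tree with \(\delta^{+} \neq 1\) to which Corollary~\ref{c3.5} and then Theorem~\ref{th3.6} apply) are precisely the missing details. Your care about the reversibility of the leaf-deletion step, the interchangeability of leaf children under rooted isomorphism, and the one-point case \(|X| = 1\) is exactly what a full write-up of the paper's remark requires.
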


\begin{example}\label{ex3.8}
Let \((X, d)\) be an ultrametric space such that \(X = \{x_1, x_2, x_3, x_4\}\) and
\[
d(x_1, x_2) = d(x_2, x_3) = d(x_3, x_4) = d(x_4, x_1) = \diam X
\]
and
\[
d(x_1, x_3) = d(x_2, x_4) < \diam X.
\]
Then \(\delta^{+}(u_0) = 2\) holds for every internal vertex \(u_0 \in V(T_X)\), and \(\delta^{+}(u_1) = 3\) holds for every internal vertex \(u_1 \in V(T_{\BB_{X}})\), and \(\delta^{+}(u_2) = 4\) holds for every internal vertex \(u_2 \in V(T_{\BB_{\BB_X}})\) and so on \(\ldots\) (see Figure~\ref{fig2}).
\end{example}

\begin{figure}[htb]
\begin{tikzpicture}[scale=1]
\tikzstyle{level 1}=[level distance=10mm,sibling distance=1.2cm]
\tikzstyle{level 2}=[level distance=10mm,sibling distance=0.5cm]
\tikzstyle{level 3}=[level distance=10mm,sibling distance=.5cm]
\tikzset{solid node/.style={circle,draw,inner sep=1.5pt,fill=black}}

\node at (0,4.5) [label=right:\(T_X\)] {};
\node at (0,4) [solid node] {}
child{node [solid node] {}
	child{node [solid node] {}}
	child{node [solid node] {}}
}
child{node [solid node] {}
	child{node [solid node] {}}
	child{node [solid node] {}}
};

\node at (6,4.5) [label=right:\(T_{\BB_{X}}\)] {};
\node at (6,4) [solid node, sibling distance=1cm] {}
child{node [solid node] {}
	child{node [solid node] {}}
	child{node [solid node] {}}
	child{node [solid node] {}}
}
child{node [solid node] {}}
child{node [solid node] {}
	child{node [solid node] {}}
	child{node [solid node] {}}
	child{node [solid node] {}}
};

\tikzstyle{level 1}=[level distance=10mm,sibling distance=0.8cm]
\node at (0,0.5) [label=right:\(T_{\BB_{\BB_{X}}}\)] {};
\node at (0,0) [solid node] {}
child{node [solid node] {}
	child{node [solid node] {}}
	child{node [solid node] {}}
	child{node [solid node] {}}
	child{node [solid node] {}}
}
child{node [solid node] {}}
child{node [solid node] {}}
child{node [solid node] {}
	child{node [solid node] {}}
	child{node [solid node] {}}
	child{node [solid node] {}}
	child{node [solid node] {}}
};

\tikzstyle{level 1}=[level distance=10mm,sibling distance=0.8cm]
\node at (6,0.5) [label=right:\(T_{\BB_{\BB_{\BB_{X}}}}\)] {};
\node at (6,0) [solid node] {}
child{node [solid node] {}
	child{node [solid node] {}}
	child{node [solid node] {}}
	child{node [solid node] {}}
	child{node [solid node] {}}
	child{node [solid node] {}}
}
child{node [solid node] {}}
child{node [solid node] {}}
child{node [solid node] {}}
child{node [solid node] {}
	child{node [solid node] {}}
	child{node [solid node] {}}
	child{node [solid node] {}}
	child{node [solid node] {}}
	child{node [solid node] {}}
};
\end{tikzpicture}
\caption{Beginning with the second member, each member of the sequence \(T_X\), \(T_{\BB_{X}}\), \(T_{\BB_{\BB_X}}\), \(T_{\BB_{\BB_{\BB_X}}}\), \(\ldots\) is obtained from the previous one by adding a leaf to every internal vertex of this previous.}
\label{fig2}
\end{figure}
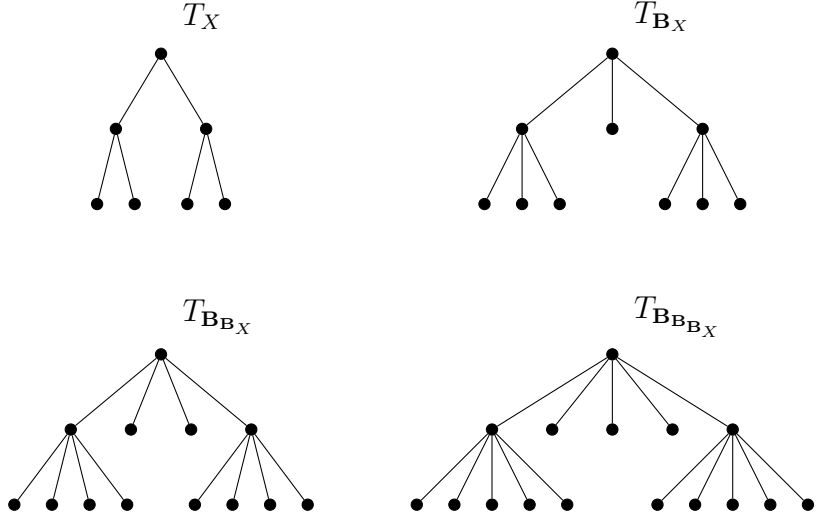

\begin{remark}\label{r3.10}
It is interesting to note that the building a tree from another one by adding a leaf to each internal vertex also occurs in some semantic studies (see~\cite{JLZ2018}).
\end{remark}

Let \((X, d)\) be a finite nonempty ultrametric space. Let us define a sequence \((\BB_{X}^{(n)})\) of finite ultrametric spaces as follows
\[
\BB_{X}^{(0)} := (X, d), \quad \BB_{X}^{(1)} := (\BB_{X}, d_H), \ldots, \quad \BB_{X}^{(n)} := (\BB_{\BB_{X}^{(n-1)}}, d_H), \ldots,
\]
i.e., for every \(n \geqslant 1\), \(\BB_{X}^{(n)}\) is the ballean of \(\BB_{X}^{(n-1)}\) together with the Hausdorff distance generated by distance on \(\BB_{X}^{(n-1)}\).

The following proposition is a corollary of Theorem~\ref{l3.3} and Theorem~\ref{th3.6}.

\begin{proposition}\label{p3.13}
Let \((X, d)\) and \((Y, \rho)\) be finite nonempty ultrametric spaces. If there is \(n \geqslant 0\) such that \((\BB_{X}^{(n)}, d_H)\) and \((\BB_{Y}^{(n)}, \rho_H)\) are isometric, then \((X, d)\) and \((Y, \rho)\) are also isomeric.
\end{proposition}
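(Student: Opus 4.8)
The plan is to reduce the proposition to the injectivity, up to isometry, of the single ballean operation \(Z \mapsto \BB_Z\), and then to settle that injectivity at the level of representing trees. First I would isolate the \emph{one-step claim}: if two finite nonempty ultrametric spaces \(Z\) and \(W\) have isometric balleans \((\BB_Z, d_H)\) and \((\BB_W, \sigma_H)\), then \(Z\) and \(W\) are themselves isometric. Granting this, the proposition follows by induction on \(n\). The base case \(n = 0\) is the hypothesis itself, since \(\BB_X^{(0)} = (X,d)\) and \(\BB_Y^{(0)} = (Y,\rho)\). For the inductive step, put \(Z' = \BB_X^{(n-1)}\) and \(W' = \BB_Y^{(n-1)}\), which are finite nonempty ultrametric spaces by Corollary~\ref{c2.8}; since \(\BB_X^{(n)} = \BB_{Z'}\) and \(\BB_Y^{(n)} = \BB_{W'}\), the one-step claim applied to \(Z'\) and \(W'\) turns the given isometry \(\BB_X^{(n)} \cong \BB_Y^{(n)}\) into an isometry \(\BB_X^{(n-1)} \cong \BB_Y^{(n-1)}\), and the induction hypothesis then yields that \((X,d)\) and \((Y,\rho)\) are isometric.

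To prove the one-step claim I would pass to representing trees. By Theorem~\ref{l3.3}, the isometry \(\BB_Z \cong \BB_W\) is equivalent to an isomorphism \(\phi \colon T_{\BB_Z} \to T_{\BB_W}\) of labeled rooted trees, and the desired conclusion that \(Z\) and \(W\) are isometric is equivalent to \(T_Z \cong T_W\) as labeled rooted trees. By Theorem~\ref{th3.6} together with the diameter identities of Lemma~\ref{l2.9}, the isomorphism there preserves labels (every added leaf is a singular ball, of label \(0\)), so \(T_{\BB_Z}\) is isomorphic, as a labeled rooted tree, to the tree obtained from \(T_Z\) by attaching one leaf of label \(0\) to every internal vertex; the same holds for \(W\). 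I would then record two facts about this construction: attaching leaves to internal vertices leaves the set of internal vertices unchanged, so the internal vertices of \(T_{\BB_Z}\) are exactly those of \(T_Z\); and, by Proposition~\ref{p3.7} (or directly from the added leaf), every internal vertex of \(T_{\BB_Z}\) carries at least one leaf-child, each leaf-child having label \(0\).

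The heart of the argument is to invert this leaf-attaching operation up to isomorphism. I would show that \(T_Z\) is recovered, up to labeled isomorphism, from \(T_{\BB_Z}\) by deleting from each internal vertex exactly one of its (label-\(0\)) leaf-children. This is well defined up to isomorphism because two leaf siblings of label \(0\) root isomorphic trivial subtrees and are therefore interchangeable by an automorphism: deleting the added leaf returns \(T_Z\) on the nose, while deleting any other \(0\)-leaf returns a tree isomorphic to \(T_Z\). Finally, choosing for each internal vertex of \(T_{\BB_Z}\) one \(0\)-leaf-child to form a set \(S\), the isomorphism \(\phi\) carries \(S\) onto a set consisting of exactly one \(0\)-leaf-child per internal vertex of \(T_{\BB_W}\), since \(\phi\) bijects internal vertices to internal vertices and preserves labels, adjacency, and the root orientation. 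Hence \(\phi\) restricts to a labeled isomorphism \(T_{\BB_Z}\setminus S \to T_{\BB_W}\setminus \phi(S)\); as these trees are isomorphic to \(T_Z\) and \(T_W\) respectively, we get \(T_Z \cong T_W\), and Theorem~\ref{l3.3} gives that \(Z\) and \(W\) are isometric.

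The main obstacle is precisely this inversion step. One must verify that the leaf-deletion is canonical up to isomorphism — which rests on the interchangeability of equally labeled leaf siblings — and that it is compatible with an \emph{arbitrary} isomorphism \(\phi\) between the two balleans' trees, not merely with the specific isomorphism of Theorem~\ref{th3.6}. The accompanying bookkeeping, namely that the label-\(0\) leaves are exactly the singular balls, that Lemma~\ref{l2.9} upgrades the unlabeled isomorphism of Theorem~\ref{th3.6} to a labeled one, and that the set of internal vertices is preserved by the construction, is routine but must be checked to make the inversion legitimate.
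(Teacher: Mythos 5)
Your proof is correct and takes essentially the same route as the paper, whose entire proof of Proposition~\ref{p3.13} is the one-line remark that it follows from Theorem~\ref{l3.3} and Theorem~\ref{th3.6} by induction on \(n\). Your write-up supplies exactly the details that remark leaves implicit --- the upgrade of Theorem~\ref{th3.6} to a \emph{labeled} isomorphism via the diameter identities of Lemma~\ref{l2.9}, and the inversion of the leaf-adding operation, justified by the interchangeability of label-\(0\) leaf siblings under automorphisms --- and all of these steps check out.
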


\begin{proof}
It follows from Theorem~\ref{l3.3} and Theorem~\ref{th3.6} by induction on \(n\).
\end{proof}

The following theorem is a generalization of Theorem~\ref{l3.4}.

\begin{theorem}\label{th3.11}
Let \(T = T(r, l)\) be a finite labeled rooted tree with the root \(r\) and the labeling \(l \colon V(T) \to \RR^{+}\) and let \(n \geqslant 0\) be an integer number. \(T\) is isomorphic (as a labeled rooted tree) to a representing tree \(T_{\BB_{X}^{(n)}}\) for some finite nonempty ultrametric space \((X, d)\) if and only if the following conditions hold.
\begin{enumerate}
\item \label{th3.11:s1} For every \(u \in V(T)\) we have
\[
\delta^{+}(u) \notin \{1, \ldots, n+1\}.
\]
\item \label{th3.11:s2} The inequality \(l(v) < l(u)\) holds for \(u\), \(v \in V(T)\), whenever \(v\) is direct successor of \(u\).
\item \label{th3.11:s3} The inequality
\[
|Ch(u) \cap \overline{L}(T)| \geqslant n
\]
holds for every internal vertex of \(T\), where \(Ch(u)\) and \(\overline{L}(T)\) are the set of direct successors of \(u\) and the set of leaves of \(T\), respectively.
\end{enumerate}
\end{theorem}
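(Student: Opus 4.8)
The plan is to argue by induction on \(n\), using the operation that adjoins one new leaf to each internal vertex of a labeled rooted tree. For a finite labeled rooted tree \(S = S(r, l)\) in which every internal vertex carries a positive label and every leaf carries the label \(0\), let \(\Lambda(S)\) denote the labeled rooted tree obtained from \(S\) by adding to every internal vertex a single new leaf with label \(0\). The basic fact I would use is that Theorem~\ref{th3.6}, read together with Lemma~\ref{l2.9} (which shows that the correspondence \(B \mapsto \BB_B\) preserves diameters, hence labels), yields a labeled version of itself: \(T_{\BB_X} \cong \Lambda(T_X)\) as labeled rooted trees, the adjoined leaves being the singular balls \(\{B\}\) of diameter \(0\). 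Iterating, \(T_{\BB_X^{(n)}} \cong \Lambda^n(T_X)\) for every \(n \geqslant 0\). Thus it suffices to show that conditions \ref{th3.11:s1}--\ref{th3.11:s3} describe exactly the labeled rooted trees of the form \(\Lambda^n(S)\) with \(S\) satisfying condition~\ref{l3.4:s1} of Theorem~\ref{l3.4}. As in Theorem~\ref{l3.4}, I read the hypotheses with the normalization that every leaf of \(T\) is labeled \(0\) and every internal vertex positively; this is forced by the left-hand side, since all leaves of any \(T_{\BB_X^{(n)}}\) are singular balls.

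The base case \(n = 0\) is precisely Theorem~\ref{l3.4}: condition~\ref{th3.11:s1} reads \(\delta^{+}(u) \neq 1\), condition~\ref{th3.11:s2} is the strict decrease of labels along edges, and condition~\ref{th3.11:s3} is vacuous. For the inductive step the key structural observation is that \(\Lambda\) neither creates nor destroys internal vertices: it only attaches new leaves to existing internal vertices, so the internal vertices of \(\Lambda(S)\) are exactly those of \(S\), each with its successor count raised by one. Hence, assuming the characterization for \(n-1\), the forward implication is a bookkeeping computation: if \(T = \Lambda(T')\) with \(T' = T_{\BB_X^{(n-1)}}\) satisfying the \((n-1)\)-version of \ref{th3.11:s1}--\ref{th3.11:s3}, then each internal vertex of \(T\) has \(\delta^{+} \geqslant (n+1) + 1 = n+2\), which gives \ref{th3.11:s1}; it has gained one further leaf child, which upgrades the bound in \ref{th3.11:s3} from \(n-1\) to \(n\); and \ref{th3.11:s2} is inherited, together with the fact that the new leaves, labeled \(0\), sit below positively labeled vertices.

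For the converse I would invert \(\Lambda\). Given \(T\) satisfying \ref{th3.11:s1}--\ref{th3.11:s3} with \(n \geqslant 1\), condition~\ref{th3.11:s3} guarantees that every internal vertex of \(T\) has at least one leaf child, so I may form \(T'\) by deleting one leaf child (necessarily of label \(0\)) from each internal vertex; the deletions at distinct vertices do not interfere, since deleted leaves have no descendants, so \(T'\) is well defined up to isomorphism. It then has to be checked that \(T'\) satisfies the \((n-1)\)-version of the three conditions: by \ref{th3.11:s1} each internal vertex of \(T\) has \(\delta^{+} \geqslant n+2\), so after one deletion it has \(\delta^{+} \geqslant n+1 \geqslant 2\) and in particular remains internal with \(\delta^{+} \notin \{1, \ldots, n\}\); condition~\ref{th3.11:s3} drops from \(n\) to \(n-1\); and \ref{th3.11:s2} together with the label normalization is preserved. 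By the inductive hypothesis \(T' \cong T_{\BB_X^{(n-1)}}\) for some finite nonempty ultrametric space \((X, d)\), and applying \(\Lambda\) (which respects isomorphism of labeled rooted trees) yields \(T \cong \Lambda(T') \cong T_{\BB_X^{(n)}}\), as required.

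The main obstacle I anticipate is the converse step, and specifically the verification that the inverse operation \(\Lambda^{-1}\) lands back inside the admissible class: one must use condition~\ref{th3.11:s1} in the sharp form \(\delta^{+} \geqslant n+2\) to be sure that deleting a leaf from each internal vertex leaves every such vertex with at least two successors, so that no internal vertex degenerates into a leaf and the output is again a representing-type tree, while condition~\ref{th3.11:s3} is exactly what makes the deletion possible in the first place. The identification \(\Lambda(\Lambda^{-1}(T)) \cong T\) also quietly uses the label normalization: it is only because the deleted and re-adjoined leaves both carry label \(0\) that the two operations are mutually inverse up to labeled isomorphism.
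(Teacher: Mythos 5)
Your proof is correct and follows essentially the same route as the paper, whose own argument is only a sketch: induction on \(n\) with base case Theorem~\ref{l3.4}, the inductive step carried by Theorem~\ref{th3.6} upgraded to labeled rooted trees via the diameter identity \(\diam \BB_{Z} = \diam Z\) noted in the paper (cf.\ Lemma~\ref{l2.9}). Your explicit handling of the converse step (inverting the add-a-leaf operation, using condition~\ref{th3.11:s1} to keep every internal vertex internal and condition~\ref{th3.11:s3} to make the deletion possible) and of the leaf-label-\(0\) normalization merely fills in details the paper leaves implicit.
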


\begin{proof}[Sketch of the proof]
The theorem holds for \(n = 0\). In this case, it is a reformulation of Theorem~\ref{l3.4}. The general case can be obtained by induction on \(n\) with the help of Lemma~\ref{l2.9} and Theorem~\ref{th3.6}. Note only that for every finite ultrametric space \((Y, \rho)\) and every \(Z \in \BB_{Y}\) we have the equalities
\begin{multline*}
\diam Z = \sup\{\rho(x, y)\colon x, y \in Z\} \\
= \diam \BB_{Z} = \sup\{d_H(B_1, B_2) \colon B_1, B_2 \in \BB_{Z}\}
\end{multline*}
so that condition~\ref{th3.11:s2} in the present theorem is a reformulation of the corresponding condition from Theorem~\ref{l3.4}.
\end{proof}

The next theorem gives the necessary and sufficient conditions under which a given family of subsets of a finite set \(X\) coincides with the family of all balls generated by ultrametric on \(X\).

\begin{theorem}\label{th1.10}
Let \(X\) be a finite nonempty set and let \(\mathcal{F}\) be a nonempty set of nonempty subsets of \(X\). Then the following statements are equivalent.
\begin{enumerate}
\item\label{th1.10:s1} There is an ultrametric \(d \colon X \times X \to \RR^{+}\) such that \(\mathcal{F}\) is the ballean of \((X, d)\),
\[
\mathcal{F} = \BB_X.
\]
\item\label{th1.10:s2} The set \(\mathcal{F}\) satisfies the conditions:
\begin{enumerate}
\item \(X \in \mathcal{F}\) and \(\{x\} \in \mathcal{F}\) for every \(x \in X\);
\item If \(X_1\), \(X_2 \in \mathcal{F}\) and \(X_1 \cap X_2 \neq \varnothing\), then we have
\[
X_1 \subseteq X_2 \quad \text{or}\quad X_2 \subseteq X_1.
\]
\end{enumerate}
\end{enumerate}
\end{theorem}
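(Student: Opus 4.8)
The plan is to prove both implications, with the bulk of the work in the reverse direction. The implication \ref{th1.10:s1} $\Rightarrow$ \ref{th1.10:s2} is immediate from the material already assembled: if $\mathcal{F} = \BB_X$ for an ultrametric $d$, then $\{x\} = B_0(x)$ and $X = B_{\diam X}(c)$ are balls for any $c \in X$ (here $X$ is bounded because it is finite), which gives condition~(a), while condition~(b) is precisely statement~\ref{p1.8:s2} of Proposition~\ref{p1.8}. So the real content lies in \ref{th1.10:s2} $\Rightarrow$ \ref{th1.10:s1}.

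Assume $\mathcal{F}$ satisfies (a) and (b). First I would organize $\mathcal{F}$ into a rooted tree $T$ by inclusion: set $V(T) = \mathcal{F}$ and declare $B$ to be a direct successor of $C$ when $B \subsetneq C$ with no member of $\mathcal{F}$ lying strictly between them. Condition~(b) guarantees that for each $A \in \mathcal{F}$ the collection $\{B \in \mathcal{F} \colon A \subseteq B\}$ is totally ordered by inclusion, since any two of its members meet in a set containing $A \neq \varnothing$; by condition~(a) this collection is a finite chain topped by $X$. Hence $T$ is genuinely a tree rooted at $X$, the successors of a vertex $B$ are exactly the members of $\mathcal{F}$ strictly contained in $B$, and the leaves are precisely the singletons $\{x\}$, $x \in X$.

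Next I would label $T$ by letting $l(B)$ be the length of the longest chain in $\mathcal{F}$ descending from $B$ to a singleton (the height of the subtree rooted at $B$), so that leaves receive $0$, internal vertices receive positive values, and $l$ strictly decreases along each edge. The crucial point, and the step I expect to be the main obstacle, is to verify that no vertex has exactly one direct successor. Suppose an internal (that is, non-singleton) vertex $B$ had a unique child $C \subsetneq B$. By condition~(b) every member of $\mathcal{F}$ properly contained in $B$ is comparable with $C$, and maximality of $C$ forces it to lie inside $C$; applying this to each singleton $\{x\}$ with $x \in B$, available by condition~(a), yields $x \in C$ for all such $x$, whence $B \subseteq C$, a contradiction. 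Thus $\delta^{+}(u) \neq 1$ for every $u$, and together with the labeling this is exactly condition~\ref{l3.4:s1} of Theorem~\ref{l3.4}.

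Finally I would invoke the established machinery. Theorem~\ref{l3.4} produces a finite ultrametric space $(X, d)$ on the set of leaves of $T$, which we identify with $X$ via $\{x\} \leftrightarrow x$, whose representing tree $T_X$ is isomorphic to $T$ as a labeled rooted tree. It then remains to check $\BB_X = \mathcal{F}$. By statement~\ref{th2.6:s1} of Theorem~\ref{th2.6} together with equality~\eqref{eq2.4}, the balls of $(X, d)$ are exactly the sets $L(T_v)$, $v \in V(T_X)$; transporting this through the isomorphism (see Corollary~\ref{c3.4} and Remark~\ref{r3.6}), each vertex $B \in \mathcal{F}$ of $T$ corresponds to the leaf-set $L(T_B)$, which consists of all singletons below $B$ in $T$, namely the $\{x\}$ with $x \in B$. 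Under the identification this reads $L(T_B) = B$, so $\BB_X = \{L(T_B) \colon B \in \mathcal{F}\} = \mathcal{F}$, completing the proof. The only genuinely delicate ingredient is the single-child exclusion above, which is exactly where condition~(a) on the presence of all singletons is indispensable; everything else is bookkeeping with Theorem~\ref{l3.4}, Corollary~\ref{c3.4}, and Theorem~\ref{th2.6}.
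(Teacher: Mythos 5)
Your proof is correct, but it takes a genuinely different route from the paper's. The paper handles \ref{th1.10:s2} \(\Rightarrow\) \ref{th1.10:s1} by a direct, self-contained construction: it defines
\[
d(x, y) := \min\{|F| \colon x, y \in F \in \mathcal{F}\} - 1,
\]
checks the strong triangle inequality straight from condition \((ii_2)\) (the sets \(F_1 \ni x, z\) and \(F_2 \ni y, z\) meet at \(z\), hence are nested, and the larger one contains all three points), and then verifies the two inclusions by hand: every ball \(B_r(x)\) is the union of the chain of members of \(\mathcal{F}\) containing \(x\) of small enough cardinality, hence equals the largest of them and lies in \(\mathcal{F}\); conversely each \(F \in \mathcal{F}\) equals \(B_{|F|-1}(x)\) for any \(x \in F\), since a point of \(B_{|F|-1}(x) \setminus F\) would force, via \((ii_2)\), a member \(F_1 \supseteq F\) with \(|F_1| \leqslant |F|\), a contradiction. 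This cardinality-based metric is precisely what yields the constructive companion Theorem~\ref{th3.15}, and the whole argument is independent of the representing-tree machinery. You instead organize \(\mathcal{F}\) into its inclusion tree, label by height, verify condition~\ref{l3.4:s1} of Theorem~\ref{l3.4} (with the single-child exclusion as the key step), and then transport the ballean through Theorem~\ref{l3.4}, Corollary~\ref{c3.4} and Remark~\ref{r3.6}. Your route buys a structural explanation --- \(\mathcal{F}\) is a ballean exactly when its Hasse diagram is a representing tree --- at the cost of leaning on the Section~2--3 apparatus and producing a different ultrametric (the height of the smallest common member of \(\mathcal{F}\), rather than its cardinality minus one), so Theorem~\ref{th3.15} would not fall out of your argument directly.

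One small imprecision to repair: in the single-child exclusion you assert that ``by condition \((ii_2)\) every member of \(\mathcal{F}\) properly contained in \(B\) is comparable with \(C\)''; as stated this is not a consequence of \((ii_2)\), which says nothing when the two sets are disjoint. The claim is nevertheless true, but its correct justification is the tree structure you have already established: for \(x \in B\), the chain \(\{F \in \mathcal{F} \colon x \in F,\ F \subsetneq B\}\) is nonempty (it contains \(\{x\}\), by \((ii_1)\)) and totally ordered, and its maximal element is a direct successor of \(B\), hence equals the unique child \(C\); thus \(x \in C\) for every \(x \in B\), giving \(B \subseteq C\) and the desired contradiction. With that rewording your argument is complete.
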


\begin{proof}
\(\ref{th1.10:s1} \Rightarrow \ref{th1.10:s2}\). The validity of this implication follows from the definition of the balls and statement~\ref{p1.8:s2} of Proposition~\ref{p1.8}.

\(\ref{th1.10:s2} \Rightarrow \ref{th1.10:s1}\). For every pair \(x\), \(y \in X\) we write
\begin{equation}\label{th1.10:e1}
d(x, y) := \min\{|F| \colon x, y \in F \in \mathcal{F}\} - 1.
\end{equation}
If \(x\), \(y\), \(z\) are arbitrary points of \(X\) and \(\{x, z\} \subseteq F_1 \in \mathcal{F}\) and \(\{y, z\} \subseteq F_2 \in \mathcal{F}\), then we obtain
\[
d(x, y) \leqslant \max \{|F_1|, |F_2|\} - 1 = \max \{|F_1| - 1, |F_2| - 1\}.
\]
It implies the strong triangle inequality
\[
d(x, y) \leqslant \max\{d(x, z), d(z, y)\}.
\]
The function \(d\) is symmetric, moreover, from \((ii_1)\) and \eqref{th1.10:e1} it follows that \(d(x, x) = 0\) holds for all \(x \in X\). Thus \(d\) is an ultrametric on \(X\).

Let \(B\) be a closed ball in \((X, d)\). Then there are \(x \in X\) and \(r \geqslant 0\) such that \(B = B_r(x)\). It is clear that
\[
B_r(x) = \cup \{F \in \mathcal{F} \colon x \in F \text{ and } |F| \leqslant \lfloor r\rfloor\} \in \mathcal{F}
\]
for every \(x \in X\) and every \(r \geqslant 0\), where \(r \mapsto \lfloor r\rfloor\) is the floor function. Consequently, the inclusion
\[
\mathbf{B}_X \subseteq \mathcal{F}
\]
holds. It remains to prove the inclusion
\begin{equation}\label{th1.10:e2}
\mathcal{F} \subseteq \mathbf{B}_X.
\end{equation}
Let \(F\) belong to \(\mathcal{F}\) and let \(x\) belong to \(F\). We claim that
\[
F = B_r(x)
\]
holds with \(r = |F| - 1\). From~\eqref{th1.10:e1} it follows that \(F \subseteq B_r(x)\). Suppose that \(B_r(x) \setminus F \neq \varnothing\). If \(x_1\) belongs to \(B_r(x) \setminus F\), then \(d(x, x_1) \leqslant r\) and using \eqref{th1.10:e1} again we can find \(F_1 \in \mathcal{F}\) such that \(|F_1| \leqslant |F|\) and \(\{x, x_1\} \subseteq F_1\). Since \(x \in F \cap F_1\) and \(x_1 \in F_1 \setminus F\), condition~\((ii_2)\) implies \(F_1 \supseteq F\). Consequently, the inequality \(|F_1| \geqslant |F| + 1\) holds, contrary to \(|F_1| \leqslant |F|\). Inclusion~\eqref{th1.10:e2} follows.
\end{proof}

Analyzing the above proof we obtain a constructive variant of Theorem~\ref{th1.10}.

\begin{theorem}\label{th3.15}
Let \(X\) be a finite nonempty set and let \(\mathcal{F}\) be a nonempty set of nonempty subsets of \(X\). If we can find an ultrametric \(d \colon X \times X \to \RR^{+}\) such that \(\mathcal{F}\) is the ballean of \((X, d)\), then the function \(\tau \colon X\times X \to \RR^{+}\) defined as
\[
\tau(x, y) = \min\{|X_i| -1 \colon  x, y \in X_i \text{ and } X_i \in \mathcal{F}\},
\]
where \(|X_i|\) is the number of elements of \(X_i\), is also an ultrametric on \(X\) and \(\mathcal{F}\) is the ballean of the ultrametric space \((X, \tau)\).
\end{theorem}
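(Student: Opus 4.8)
The plan is to reduce the statement to Theorem~\ref{th1.10} and, more precisely, to the explicit construction that appears inside its proof. By hypothesis there is an ultrametric \(d\) on \(X\) with \(\mathcal{F} = \BB_X\), so statement~\ref{th1.10:s1} of Theorem~\ref{th1.10} holds. The already proved implication \(\ref{th1.10:s1} \Rightarrow \ref{th1.10:s2}\) then guarantees that \(\mathcal{F}\) satisfies the combinatorial conditions \((ii_1)\) and \((ii_2)\): both \(X\) and every singleton \(\{x\}\) lie in \(\mathcal{F}\), and any two members of \(\mathcal{F}\) with nonempty intersection are nested. This is the only place where the hypothesis is used, and it is crucial that these two conditions refer to \(\mathcal{F}\) alone, not to the particular \(d\) we started from.

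First I would observe that \(\tau\) is literally the function defined by~\eqref{th1.10:e1} in the proof of the implication \(\ref{th1.10:s2} \Rightarrow \ref{th1.10:s1}\), since \(\tau(x, y) = \min\{|F| \colon x, y \in F \in \mathcal{F}\} - 1\). That argument verified, using only \((ii_1)\) and \((ii_2)\), that this function is symmetric, vanishes exactly on the diagonal, and satisfies the strong triangle inequality; hence the very same computation shows that \(\tau\) is an ultrametric on \(X\). In particular the estimate \(\tau(x, y) \leqslant \max\{|F_1|, |F_2|\} - 1\) for arbitrary \(F_1 \supseteq \{x, z\}\) and \(F_2 \supseteq \{z, y\}\) carries over verbatim and yields the strong triangle inequality for \(\tau\).

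It then remains to transport the second half of that proof to conclude that \(\mathcal{F}\) is the ballean of \((X, \tau)\). Writing \(\BB_X\) for this ballean, the inclusion \(\BB_X \subseteq \mathcal{F}\) follows by expressing each ball \(B_r(x)\) of \((X, \tau)\) as the union \(\bigcup\{F \in \mathcal{F} \colon x \in F \text{ and } |F| \leqslant \lfloor r \rfloor\}\), which by \((ii_2)\) is a nested chain and hence a member of \(\mathcal{F}\). The reverse inclusion \(\mathcal{F} \subseteq \BB_X\) follows by showing, for \(F \in \mathcal{F}\) and \(x \in F\), that \(F = B_r(x)\) with \(r = |F| - 1\): the inclusion \(F \subseteq B_r(x)\) is immediate from the definition of \(\tau\), and any point in \(B_r(x) \setminus F\) is excluded by \((ii_2)\) exactly as in the proof of Theorem~\ref{th1.10}. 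I do not anticipate a genuine obstacle, since the content of the statement is precisely that the construction underlying Theorem~\ref{th1.10} is canonical in \(\mathcal{F}\); the only point worth emphasizing in a full write-up is that \(\tau\) is manufactured from \(\mathcal{F}\) without reference to \(d\), so that it serves as a distinguished representative of all ultrametrics whose ballean equals \(\mathcal{F}\).
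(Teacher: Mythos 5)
Your proposal is correct and is essentially the paper's own argument: the paper disposes of Theorem~\ref{th3.15} with the single remark ``analyzing the above proof,'' meaning exactly what you spell out --- \(\tau\) coincides with the function \eqref{th1.10:e1} constructed in the proof of the implication \(\ref{th1.10:s2} \Rightarrow \ref{th1.10:s1}\) of Theorem~\ref{th1.10}, that construction uses only conditions \((ii_1)\) and \((ii_2)\), and those conditions follow from the hypothesis via the implication \(\ref{th1.10:s1} \Rightarrow \ref{th1.10:s2}\). One harmless off-by-one you inherited verbatim from the paper's display: since \(\tau(x,y) = |F| - 1\), the union representing \(B_r(x)\) should run over \(F \in \mathcal{F}\) with \(x \in F\) and \(|F| \leqslant \lfloor r \rfloor + 1\), not \(|F| \leqslant \lfloor r \rfloor\).
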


\begin{example}\label{ex1.11}
Let \(X = \{x_1, \ldots, x_n\}\) be a finite set and let
\[
\mathcal{F} = \{X, \{x_1\}, \ldots, \{x_n\}\}.
\]
Then the equality \(\mathcal{F} = \BB_X\) holds with the constant ultrametric \(d\),
\[
d(x_i, x_j) = c, \quad c > 0,
\]
for all distinct \(x_i\), \(x_j \in X\).
\end{example}

\textbf{Acknowledgments.} This research was partially supported by State Fund for Fundamental Research of Ukraine (Project F75/28173 ``Investigation of the asymptotic and graphic structures of given classes of metric spaces'').

\end{document}